\title{Algebraic surfaces and hyperbolic geometry}
\author{Burt Totaro}
\date{  }
\def\Z{\text{\bf Z}}
\def\Q{\text{\bf Q}}
\def\R{\text{\bf R}}
\def\C{\text{\bf C}}
\def\P{\text{\bf P}}
\def\arrow{\rightarrow}
\def\inj{\hookrightarrow}
\def\qed{\ QED }
\def\Aut{\text{Aut}}
\def\PsAut{\text{PsAut}}
\def\Curv{\overline{\text{Curv}}}
\def\Pic{\text{Pic}}
\def\rank{\text{rank}}
\def\im{\text{im}}
\def\dim{\text{dim}}
\def\Nef{\text{Nef}}
\def\Amp{\text{Amp}}
\def\vol{\text{vol}}
\begin{document}
\maketitle

\newtheorem{theorem}{Theorem}[section]
\newtheorem{corollary}[theorem]{Corollary}
\newtheorem{lemma}[theorem]{Lemma}
\newtheorem{conjecture}[theorem]{Conjecture}

\theoremstyle{definition}
\newtheorem{definition}[theorem]{Definition}
\newtheorem{example}[theorem]{Example}

\theoremstyle{remark}
\newtheorem{remark}[theorem]{Remark}

Many properties of a projective algebraic variety
can be encoded by convex cones, such as the ample cone and
the cone of curves. This is especially useful when these cones
have only finitely many edges, as happens for Fano varieties.
For a broader class of varieties which includes Calabi-Yau
varieties and many rationally connected varieties,
the Kawamata-Morrison cone conjecture predicts
the structure of these cones. I like to think of this conjecture
as what comes after the abundance conjecture. Roughly speaking,
the cone theorem of Mori-Kawamata-Shokurov-Koll\'ar-Reid
describes the structure of 
the curves on a projective variety $X$ on which the canonical bundle
$K_X$ has negative degree; the abundance conjecture
would give strong information about the curves on which $K_X$ has degree zero;
and the cone conjecture fully describes the structure
of the curves on which $K_X$ has degree zero.

We give a gentle summary of the proof
of the cone conjecture for algebraic surfaces,
with plenty of examples \cite{Totaro}.
For algebraic surfaces,
these cones are naturally described using hyperbolic geometry,
and the proof can also be formulated in those terms.

Example \ref{non-arithex} shows that the automorphism
group of a K3 surface need not be commensurable with an arithmetic
group. This answers a question
by Barry Mazur \cite[section 7]{Mazur}.

Thanks to John Christian Ottem,
Artie Prendergast-Smith, and Marcus Zibrowius for their
comments.

\section{The main trichotomy}

Let $X$ be a smooth complex projective variety. There are three main types
of varieties. (Not every variety is of one of these three types, but minimal
model theory relates every variety to one of these extreme types.)

{\it Fano. }This means that $-K_X$ is ample. (We recall the definition
of ampleness in section \ref{ample}.)

{\it Calabi-Yau. }We define this to mean that $K_X$ is numerically trivial.

{\it ample canonical bundle. }This means that $K_X$ is ample; it implies
that $X$ is of ``general type.''

Here, for $X$ of complex dimension $n$, the {\it canonical bundle }$K_X$
is the line bundle $\Omega^n_X$ of $n$-forms. We write $-K_X$ for the dual
line bundle $K_X^*$, the determinant of the tangent bundle.

\begin{example}
Let $X$ be a curve, meaning that $X$ has complex dimension 1.
Then $X$ is Fano if it has genus zero, or equivalently if $X$ is isomorphic
to the complex projective line $\P^1$; as a topological space,
this is the 2-sphere.
Next, $X$ is Calabi-Yau if $X$ is an elliptic curve,
meaning that $X$ has genus 1.
Finally, $X$ has ample canonical bundle if it has genus at least 2.
\end{example}

\begin{example}
Let $X$ be a smooth surface in $\P^3$. Then $X$ is Fano if it has degree
at most 3. Next, $X$ is Calabi-Yau if it has degree 4; this is one class
of {\it K3 surfaces}. Finally, $X$ has ample canonical bundle if it has degree
at least 5.
\end{example}

Belonging to one of these three classes of varieties is
equivalent to the existence of
a K\"ahler metric with Ricci curvature of a given sign, by Yau \cite{Yau}.
Precisely,
a smooth projective variety is Fano if and only if
it has a K\"ahler metric with positive
Ricci curvature; it is Calabi-Yau if and only if
it has a Ricci-flat K\"ahler metric;
and it has ample canonical bundle if and only if it has a K\"ahler metric
with negative Ricci curvature.

We think of Fano varieties as the most special class
of varieties, with projective space as a basic example.
Strong support for this idea is provided by
Koll\'ar-Miyaoka-Mori's theorem that smooth Fano varieties of dimension $n$
form a bounded family \cite{KMM}. In particular, there are only finitely many
diffeomorphism
types of smooth Fano varieties of a given dimension.

\begin{example}
Every smooth Fano surface is isomorphic to $\P^1\times \P^1$ or 
to a blow-up of $\P^2$ at at most 8 points. The classification of smooth
Fano 3-folds is also known, by Iskovskikh, Mori, and Mukai;
there are 104 deformation classes \cite{IP}.
\end{example}

By contrast, varieties with ample canonical bundle form a vast and
uncontrollable class. Even in dimension 1, there are infinitely many
topological types of varieties with ample canonical bundle
(curves of genus at least 2). Calabi-Yau varieties are on the border
in terms of complexity. It is a notorious open question whether there
are only finitely many topological types of smooth Calabi-Yau varieties of
a given dimension. This is true in dimension at most 2. In particular, a 
smooth Calabi-Yau
surface is either an abelian surface, a K3 surface, or a quotient of one
of these surfaces by a free action of a finite group (and only finitely many
finite groups occur this way).

\section{Ample line bundles and the cone theorem}
\label{ample}

After a quick review of ample line bundles, this section states
the cone theorem and its application to Fano varieties.
Lazarsfeld's
book is an excellent reference on ample line bundles \cite{Lazarsfeld}.

\begin{definition}
A line bundle $L$ on a projective variety $X$ is {\it ample }if
some positive multiple $nL$ (meaning the line bundle $L^{\otimes n}$) has
enough global sections to give a projective embedding
$$X\inj \P^N.$$
(Here $N=\dim_{\C}H^0(X,nL)-1$.)
\end{definition}

One reason to investigate which line bundles are ample
is in order to classify algebraic varieties. For classification,
it is essential to know how to describe a variety with given
properties as a subvariety of a certain projective
space defined by equations of certain degrees.

\begin{example}
For $X$ a curve, $L$ is ample on $X$
if and only if it has positive degree. We write
$L\cdot X  = \deg(L|_X)\in \Z$.
\end{example}

An {\it $\R$-divisor }on a smooth projective variety $X$ is a finite sum
$$D=\sum a_i D_i$$
with $a_i\in \R$ and each $D_i$
an irreducible divisor (codimension-one subvariety) in $X$.
Write $N^1(X)$ for the ``N\'eron-Severi'' 
real vector space of $\R$-divisors modulo {\it numerical
equivalence}: $D_1\equiv D_2$ if $D_1\cdot C=D_2\cdot C$ for all curves $C$
in $X$. (For me, a {\it curve }is irreducible.)

We can also define $N^1(X)$ as the subspace of
the cohomology $H^2(X,\R)$ spanned by divisors.
In particular, it is a finite-dimensional real vector space.
The dual vector space $N_1(X)$
is the space of 1-cycles $\sum a_iC_i$ modulo numerical equivalence,
where $C_i$ are curves on $X$. We can identify $N_1(X)$ with the subspace
of the homology $H_2(X,\R)$ spanned by algebraic curves.

\begin{definition}
The {\it closed cone of curves }$\Curv(X)$ is the closed convex cone
in $N_1(X)$ spanned by curves on $X$.
\end{definition}

\begin{definition}
An $\R$-divisor $D$ is {\it nef }if $D\cdot C\geq 0$ for all curves $C$ in $X$.
Likewise, a line bundle $L$ on $X$
is {\it nef }if the class $[L]$ of $L$ (also called
the first Chern class $c_1(L)$) in $N^1(X)$ is nef. That is,
$L$ has nonnegative
degree on all curves in $X$.
\end{definition}

Thus $\Nef(X)\subset N^1(X)$ is a closed convex cone, the {\it dual cone }to
$\Curv(X)\subset N_1(X)$.

\begin{theorem}
(Kleiman) A line bundle $L$ is ample if and only if $[L]$ is in the interior
of the nef cone in $N^1(X)$.
\end{theorem}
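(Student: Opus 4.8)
The plan is to reduce the statement to Kleiman's criterion in its classical form and then attack that by induction via the Nakai--Moishezon criterion. Since $\Nef(X)$ is by definition the dual cone of the closed convex cone $\Curv(X)\subset N_1(X)$, elementary convex geometry identifies its interior:
$$\Int\Nef(X)=\{\,[L]: L\cdot\gamma>0\text{ for all }\gamma\in\Curv(X)\setminus\{0\}\,\}.$$
(For any closed convex cone $C$ in a finite-dimensional vector space, a functional lies in the interior of $C^\vee$ precisely when it is strictly positive on $C\setminus\{0\}$; one direction is immediate, and for the other one restricts functionals to the compact set $C\cap S$ with $S$ a unit sphere, where strict positivity is an open condition.) So it is enough to show that $L$ is ample if and only if $L\cdot\gamma>0$ for every $\gamma\in\Curv(X)\setminus\{0\}$. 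Fix an ample divisor $H$ on $X$.

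The forward implication is easy: if $nL$ gives an embedding $X\inj\P^N$ then $nL$ is the restriction of $\mathcal O(1)$, so every curve $C\subset X$ maps to a positive-dimensional curve in $\P^N$ and $L\cdot C=\tfrac1n\deg\mathcal O(1)|_C>0$; the compact-slice argument above then promotes this to strict positivity on all of $\Curv(X)\setminus\{0\}$, i.e.\ $[L]\in\Int\Nef(X)$.

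For the converse, suppose $L\cdot\gamma>0$ for all $\gamma\in\Curv(X)\setminus\{0\}$; then $L$ is nef and has positive degree on every curve. I would induct on $\dim X$ and invoke the Nakai--Moishezon criterion: $L$ is ample as soon as $L^{\dim V}\cdot V>0$ for every positive-dimensional subvariety $V\subseteq X$, including $V=X$. For $V\subsetneq X$, the restriction $L|_V$ satisfies the same hypothesis on $V$: any curve $C$ on $V$ has $[C]\ne 0$ in $N_1(X)$ because $H\cdot C>0$, so $L|_V\cdot C=L\cdot C>0$; and if a class $\gamma\in\Curv(V)$ dies in $N_1(X)$ then $\gamma=0$, since the restriction $H|_V$ of the ample divisor $H$ is again ample, hence (by the forward implication, applied on $V$) strictly positive on $\Curv(V)\setminus\{0\}$, while $H|_V\cdot\gamma=H\cdot\gamma=0$. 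By induction $L|_V$ is ample, so $L^{\dim V}\cdot V=(L|_V)^{\dim V}\cdot V>0$; this leaves only $V=X$.

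The case $V=X$ is the crux and the step I expect to be the main obstacle. Pairing $L^{\dim X-1}\cdot X\in N_1(X)$ against $H$: for an effective $D\in|mH|$ each of whose components is a proper subvariety on which $L$ is ample (by the previous paragraph), one gets $H\cdot(L^{\dim X-1}\cdot X)=\tfrac1m(L|_D)^{\dim X-1}\cdot D>0$, so in particular $L^{\dim X-1}\cdot X\ne 0$. If one knows in addition that $L^{\dim X-1}\cdot X$ lies in $\Curv(X)$, then the hypothesis applied to this class gives $L^{\dim X}\cdot X=L\cdot(L^{\dim X-1}\cdot X)>0$, finishing the verification of Nakai--Moishezon. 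The missing input, that $L^{\dim X-1}\cdot X\in\Curv(X)$ for the nef class $L$, is the geometric heart of Kleiman's theorem --- essentially the statement that a nef class is a limit of ample classes --- and establishing it without circularity is the delicate point: the standard route examines the polynomial $(L+tH)^{\dim X}\cdot X$, observes that its non-constant coefficients are nonnegative (using the proper-subvariety case together with Bertini), that $L+tH$ is ample for $t\gg 0$ because the ample cone is open, and then pushes positivity down toward $t=0$. I would follow Kleiman's original argument, or the account in Lazarsfeld's book, for this last piece rather than reproduce the intersection-theoretic bookkeeping here.
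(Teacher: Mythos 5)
The paper does not actually prove this statement---it is quoted as Kleiman's classical criterion, with Lazarsfeld's book as the standing reference---so your proposal can only be measured against the standard argument, and there it has one genuine error and one acknowledged hole. The error is in the direction you call easy. From ampleness you extract only that $L\cdot C>0$ for every irreducible curve $C$; the ``compact-slice argument'' cannot promote this to strict positivity on $\Curv(X)\setminus\{0\}$, because the slice $\Curv(X)\cap S$ is the \emph{closure} of the set of rays spanned by curve classes, and the infimum of $L\cdot\gamma$ over that compact set may be attained at a limit class on which $L$ vanishes. The inference ``positive degree on every curve $\Rightarrow$ interior of the nef cone'' is in fact false: Mumford's example (see Lazarsfeld, Example 1.5.2) is a divisor on a ruled surface over a curve of genus at least $2$ with strictly positive degree on every curve which is not ample and sits on the boundary of $\Nef(X)$. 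So the forward implication needs the full strength of ampleness, e.g.\ via openness of the ample cone: choosing classes $D_1,\dots,D_\rho$ spanning $N^1(X)$, one shows $L\pm\epsilon D_i$ is still ample (hence nef) for small $\epsilon>0$, and then for $\gamma\in\Curv(X)\setminus\{0\}$ one gets $L\cdot\gamma\geq\epsilon\,|D_i\cdot\gamma|$ with some $D_i\cdot\gamma\neq 0$, which is what actually places $[L]$ in $\Int\Nef(X)$.

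For the converse your outline is the standard Kleiman--Nakai route, and the induction handling proper subvarieties $V\subsetneq X$ is fine, but the step you flag as the crux---that $L^{\dim X-1}\cdot X$ lies in $\Curv(X)$, equivalently that a nef class is a limit of ample classes, equivalently Kleiman's nonnegativity $L^{\dim V}\cdot V\geq 0$ for nef $L$---is exactly the substance of the theorem, and you defer it wholesale to Kleiman or Lazarsfeld (the usual proof runs the induction on the polynomial $(L+tH)^{\dim V}\cdot V$ rather than trying to place $L^{\dim X-1}\cdot X$ in the curve cone). As written, then, the proposal is an outline whose hard half is cited rather than proved and whose easy half is proved by an invalid compactness step; repairing the forward direction as above and supplying the nonnegativity lemma would make it a complete (and entirely standard) proof.
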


This is a {\it numerical }characterization of ampleness. It shows that we know
the ample cone $\Amp(X)\subset N^1(X)$ if we know the cone of curves
$\Curv(X)\subset N_1(X)$. The following theorem gives a good
understanding of the ``$K$-negative'' half of the cone of curves
\cite[Theorem 3.7]{KMbook}. A {\it rational curve }means a curve
that is birational to $\P^1$.

\begin{theorem}
(Cone theorem; Mori, Shokurov, Kawamata, Reid, Koll\'ar). Let $X$ be a smooth
projective variety. Write $K_X^{<0}=\{ u\in N_1(X): K_X\cdot u<0\}$.
Then every extremal ray of $\Curv(X)\cap K_X^{<0}$ is isolated, 
spanned by a rational curve, and can be contracted.
\end{theorem}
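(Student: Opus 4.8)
The plan is to follow Mori's strategy, whose engine is the \emph{bend-and-break} technique combined with reduction to positive characteristic; the contraction part is then extracted from the base-point-free theorem. Write $n=\dim X$. I would split the proof into three parts: (i) through every point of a curve $C$ with $K_X\cdot C<0$ there is a rational curve $\ell$ with $0<-K_X\cdot\ell\leq n+1$; (ii) this forces every extremal ray of $\Curv(X)\cap K_X^{<0}$ to be spanned by such a rational curve and to be isolated; (iii) each such ray can be contracted.

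For (i), fix $C\subset X$ with $K_X\cdot C<0$ and a point $x\in C$. Deformation theory bounds $\dim_{[f]}\mathrm{Mor}(C,X)$ below by $\chi(C,f^*T_X)=-K_X\cdot f_*C+n(1-g(C))$, and imposing $f(x)=x$ subtracts $n$, leaving $-K_X\cdot f_*C-n\,g(C)$. This can fail to be positive when $g(C)\geq 1$, so I would first reduce $X$ and $C$ to a finite field $\F_q$ and precompose $f$ with a high power $F^e$ of the $q$-power Frobenius of $C$: this multiplies $-K_X\cdot C$ by $q^e$ without changing $g(C)$, making the pointed deformation space positive-dimensional for $e\gg 0$. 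A nonconstant family of pointed maps from a fixed curve into the projective variety $X$ cannot be proper; compactifying the parameter space to a complete curve and passing to the limit, the limiting $1$-cycle is reducible and contains a rational curve through $x$ --- one ``break.'' Iterating, and discarding components on which $-K_X$ is nonpositive, lowers the anticanonical degree until one reaches a rational curve $\ell\ni x$ with $0<-K_X\cdot\ell\leq n+1$; a spreading-out and specialization argument then returns the conclusion to $X/\C$. This step, and the detour through characteristic $p$ that it forces, is where I expect the real difficulty to lie: in characteristic $0$ there is no mechanism to make a single high-genus $K$-negative curve move, whereas the Frobenius inflates its degree while pinning its genus. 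Controlling the degenerations so that the final rational curve has degree at most $n+1$, and arranging that it meets the prescribed point, are the delicate points.

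For (ii), let $V\subset N_1(X)$ be the set of classes of rational curves $\ell$ with $0<-K_X\cdot\ell\leq n+1$. Fixing an ample $H$, the curves $\ell$ with $H\cdot\ell$ bounded form a bounded family and so span finitely many rays; letting the bound grow shows the rays $\R_{\geq 0}[\ell]$, $\ell\in V$, can accumulate only along the wall $\{K_X=0\}$. One then proves the cone identity
\[
\Curv(X)=\bigl(\Curv(X)\cap\{u:K_X\cdot u\geq 0\}\bigr)+\sum_{\ell\in V}\R_{\geq 0}[\ell]
\]
by separation: a class in the left side but not the right would be cut off by a rational linear functional, against which a $K$-negative curve witnessing the failure would have bounded degree and hence lie in $V$, a contradiction. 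It follows that any extremal ray $R$ of $\Curv(X)\cap K_X^{<0}$ coincides with one of the $\R_{\geq 0}[\ell]$, and local finiteness near the wall is precisely the assertion that $R$ is isolated and rational. For (iii), a refinement of bend-and-break --- the rationality theorem --- lets one pick a nef $\R$-divisor $D$ supporting $\Curv(X)$ exactly along $R$ with $D-K_X$ ample. The base-point-free theorem of Kawamata and Shokurov, whose proof rests on Kawamata--Viehweg vanishing, then shows that a multiple of a suitable integral such $D$ is globally generated, and the induced morphism $X\to Y:=\Proj\bigoplus_{m\geq 0}H^0(X,mD)$ has connected fibers and contracts exactly the curves with class on $R$. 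In the smooth case Mori originally built this contraction directly in characteristic $p$; the vanishing-theorem route is the one that generalizes, and is the source of the contributions of Reid and Koll\'ar.
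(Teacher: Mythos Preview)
The paper does not prove this theorem at all: it is quoted as a known result with a citation to Koll\'ar--Mori \cite[Theorem 3.7]{KMbook}, and the paper immediately moves on to applying it (deriving the corollary that Fano varieties have rational polyhedral cone of curves). So there is no ``paper's own proof'' to compare against.

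Your sketch is a faithful outline of the standard proof as presented in the cited reference: Mori's bend-and-break in characteristic $p$ via Frobenius to produce low-degree rational curves, the cone decomposition and local finiteness away from $\{K_X=0\}$, and then the rationality and base-point-free theorems for the contraction. A few remarks on precision. In (ii), your separation argument is the right idea but is stated too loosely: the actual argument requires showing that the right-hand side is already closed (this is where local finiteness toward $K_X^{\perp}$ is used before, not after, the cone identity), and the ``witnessing curve'' step needs the stronger input from (i) that through \emph{every} point of a $K$-negative curve there passes a rational curve of bounded degree, so that one can approximate the given class by such curves. In (iii), the nef supporting divisor $D$ is not produced by bend-and-break; the rationality theorem ensures one can take $D$ rational (hence, after scaling, Cartier), which is what the base-point-free theorem needs. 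These are the places where a full write-up would require care, but none of them is a genuine gap in the strategy.
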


In particular, every extremal ray of $\Curv(X)\cap K_X^{<0}$ is
rational (meaning that it is spanned by a $\Q$-linear combination
of curves, not just an $\R$-linear combination), since it is spanned
by a single curve. A {\it contraction }of a normal projective
variety $X$ means a surjection from $X$ onto a normal
projective variety $Y$ with connected fibers. A contraction is determined
by a face of the cone of curves $\Curv(X)$, the set of elements of $\Curv(X)$
whose image under the pushforward map $N_1(X)\arrow N_1(Y)$
is zero. The last statement in the cone
theorem means that every extremal ray in the $K$-negative half-space
corresponds to a contraction of $X$.

\begin{corollary}
For a Fano variety $X$, the cone of curves $\Curv(X)$ (and therefore
the dual cone $\Nef(X)$) is rational polyhedral.
\end{corollary}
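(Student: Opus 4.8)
\textbf{Proof proposal.} The plan is to deduce everything from the cone theorem by exploiting the hypothesis that $-K_X$ is ample. Since $-K_X$ is ample, Kleiman's theorem puts $-K_X$ in the interior of $\Nef(X)$; equivalently $K_X$ lies in the interior of the negative of the nef cone, so the open half-space $K_X^{<0}=\{u\in N_1(X):K_X\cdot u<0\}$ \emph{contains} the entire cone of curves minus the origin. Indeed, for any nonzero $u\in\Curv(X)$ we have $(-K_X)\cdot u>0$ because $-K_X$ is ample and $\Curv(X)\setminus\{0\}$ is exactly the locus on which every ample class is strictly positive (this is the dual formulation of Kleiman's criterion). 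Hence $\Curv(X)\setminus\{0\}\subset K_X^{<0}$, and the cone theorem applies to \emph{all} of $\Curv(X)$, not just a half of it.

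The second step is to combine this with the structural content of the cone theorem: every extremal ray of $\Curv(X)\cap K_X^{<0}$ is isolated (discrete in the set of extremal rays) and spanned by a rational curve. By the first step, $\Curv(X)\cap K_X^{<0}=\Curv(X)$, so \emph{every} extremal ray of $\Curv(X)$ is isolated. A closed convex cone whose extremal rays are all isolated, living in a finite-dimensional vector space, must have only finitely many extremal rays: the extremal rays all meet a fixed affine hyperplane transverse to $-K_X$ in a compact set (the slice of $\Curv(X)$ by that hyperplane is compact, since $-K_X$ is positive on the cone), and an isolated subset of a compact set is finite. Therefore $\Curv(X)$ has finitely many extremal rays, and since it is closed and contains no line (again because $-K_X$ is strictly positive on it), it is the convex hull of those finitely many rays, i.e.\ rational polyhedral. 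Rationality of the rays is immediate because each is spanned by a single curve, whose class lies in $N_1(X)_{\Q}$.

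Finally, $\Nef(X)$ is by definition the dual cone of $\Curv(X)$ in $N^1(X)$, and the dual of a rational polyhedral cone is rational polyhedral; this is a standard fact from convex geometry (the facet inequalities of the dual are given by the finitely many rational generators of the primal). This gives the parenthetical statement about $\Nef(X)$ and completes the proof.

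\textbf{Main obstacle.} The only delicate point is the first step: one must be careful that ``$\Curv(X)\setminus\{0\}\subset K_X^{<0}$'' really does follow from ampleness of $-K_X$, which is the dual half of Kleiman's criterion (ample classes are precisely those strictly positive on $\Curv(X)\setminus\{0\}$, using that $\Curv(X)$ is a \emph{closed} cone so its slice is compact). Once that inclusion is in hand, the finiteness of extremal rays is a soft compactness argument and the rest is routine convex duality.
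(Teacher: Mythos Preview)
Your proof is correct and follows essentially the same route as the paper: use ampleness of $-K_X$ to put all of $\Curv(X)\setminus\{0\}$ in the $K_X$-negative half-space, apply the cone theorem to conclude that every extremal ray is isolated and rational, and then invoke compactness (you slice by an affine hyperplane transverse to $-K_X$; the paper uses the unit sphere) to get finiteness. Your write-up is more explicit about Kleiman's criterion, the passage from finitely many rational rays to a rational polyhedral cone, and the duality for $\Nef(X)$, but the argument is the same.
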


A rational polyhedral cone means the closed convex cone
spanned by finitely many rational points.

\begin{proof}
Since $-K_X$ is ample, $K_X$ is negative on
all of $\Curv(X)-\{0\}$. So the cone theorem applies to all
the extremal rays of $\Curv(X)$. Since they are isolated and
live in a compact space (the unit sphere), $\Curv(X)$ has only
finitely many extremal rays. The cone theorem also gives
that these rays are rational.
\end{proof}

It follows, in particular, that a Fano variety has only finitely
many different contractions. A simple example is the blow-up $X$
of $\P^2$ at one point, which is Fano. In this case,
$\Curv(X)$ is a closed strongly convex cone
in the two-dimensional real vector space $N_1(X)$, and so it has exactly
two 1-dimensional faces. We can write down two contractions of $X$,
$X\arrow \P^2$ (contracting a $(-1)$-curve) and $X\arrow \P^1$
(expressing $X$ as a $\P^1$-bundle over $\P^1$). Each of these morphisms must
contract one of the two 1-dimensional faces of $\Curv(X)$. Because the cone
has no other nontrivial faces, these are the only nontrivial
contractions of $X$.

\section{Beyond Fano varieties}

``Just beyond'' Fano varieties, the cone of curves and the nef cone need not be
rational polyhedral. Lazarsfeld's book \cite{Lazarsfeld}
gives many examples of this type, as do other books on minimal model theory
\cite{Debarre,KMbook}.

\subsection{Example}
Let $X$ be the blow-up of $\P^2$ at $n$
very general points. For $n\leq 8$, $X$ is Fano, and so
$\Curv(X)$ is rational polyhedral. In more detail, for
$2\leq n\leq 8$, $\Curv(X)$ is the convex cone spanned
by the finitely many $(-1)$-curves in $X$. (A $(-1)$-curve
on a surface $X$ means a curve $C$ isomorphic to $\P^1$
with self-intersection number $C^2=-1$.) For example,
when $n=6$, $X$ can be identified with a cubic surface,
and the $(-1)$-curves are the famous 27 lines on $X$.

But for $n\geq 9$, $X$ is not Fano, since $(-K_X)^2=9-n$
(whereas a projective variety has positive degree
with respect to any ample line bundle).
For $p_1,\ldots,p_n$ very general points in $\P^2$, $X$ 
contains infinitely many $(-1)$-curves; see Hartshorne
\cite[Exercise V.4.15]{Hartshorne}. Every curve
$C$ with $C^2<0$ on a surface spans an isolated extremal
ray of $\Curv(X)$, and so $\Curv(X)$ is not rational
polyhedral. 

Notice that a $(-1)$-curve $C$ has $K_X\cdot C=-1$, and so these
infinitely many isolated extremal rays are on the ``good'' ($K$-negative) side
of the cone of curves, in the sense of the cone theorem. 
The $K$-positive side is a mystery. It is conjectured (Harbourne-Hirschowitz) 
that the closed cone
of curves of a very general blow-up of $\P^2$ at $n\geq 10$ points is 
the closed convex cone spanned by the $(-1)$-curves and the 
``round'' positive cone $\{x\in N_1(X): x^2\geq 0 \text{ and }H\cdot x
\geq 0\}$,
where $H$ is a fixed ample line bundle. This includes the famous
Nagata conjecture \cite[Remark 5.1.14]{Lazarsfeld}
as a special case. By de Fernex,
even if the Harbourne-Hirschowitz conjecture
is correct, the intersection of $\Curv(X)$ with the
$K$-positive half-space, for $X$ a very general blow-up
of $\P^2$ at $n\geq 11$ points, is bigger than
the intersection of the positive cone with the $K$-positive half-space,
because the $(-1)$-curves stick out a lot from the positive cone
\cite{defernex}.
 
\subsection{Example}

Calabi-Yau varieties (varieties with $K_X\equiv 0$) are also ``just beyond''
Fano varieties
($-K_X$ ample). Again, the cone of curves of a Calabi-Yau variety need
not be rational polyhedral.

For example, let $X$ be an abelian surface, so $X\cong \C^2/\Lambda$ for some
lattice $\Lambda\cong \Z^4$ such that $X$ is projective.
Then $\Curv(X)=\Nef(X)$
is a round cone, the positive cone
$$\{ x\in N^1(X): x^2\geq 0\text{ and }H\cdot x\geq 0\},$$
where $H$ is a fixed ample line bundle.
(Divisors and 1-cycles are the same thing on a surface,
and so the cones $\Curv(X)$ and $\Nef(X)$ lie in the same vector space
$N^1(X)$.)
Thus the nef cone is not rational polyhedral if $X$
has Picard number $\rho(X):=
\dim_{\R}N^1(X)$ at least 3 (and sometimes when $\rho=2$).

For a K3 surface, the closed cone of curves may be round,
or may be the closed cone spanned by the $(-2)$-curves
in $X$. (One of those two properties must hold, by Kov\'acs \cite{Kovacs}.)
There may be finitely or
infinitely many $(-2)$-curves. See section \ref{coneexample}
for an example.

\section{The cone conjecture}
\label{coneconjecture}

But there is a good substitute for the cone theorem for Calabi-Yau varieties,
the {\it Morrison-Kawamata cone conjecture}. In dimension 2, this is a theorem,
by Sterk-Looijenga-Namikawa \cite{Sterk, Namikawa, Kawamata}. We call
this Sterk's theorem for convenience:

\begin{theorem}
Let $X$ be a smooth complex projective Calabi-Yau surface
(meaning that $K_X$ is numerically trivial). Then the action
of the automorphism
group $\Aut(X)$ on the nef cone $\Nef(X)\subset N^1(X)$ has a rational
polyhedral fundamental domain.
\end{theorem}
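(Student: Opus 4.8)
The plan is to reduce the statement to a general fact about lattices acting on hyperbolic space, and then to verify the hypotheses of that fact for a Calabi-Yau surface. First I would set up the hyperbolic geometry. Since $K_X$ is numerically trivial, the intersection form on $N^1(X)$ has signature $(1,\rho-1)$ by the Hodge index theorem, so the positive cone $\{x\in N^1(X):x^2>0\}$ has two components, one of which, call it $C^+$, contains the ample classes; projectivizing $C^+$ gives a copy of hyperbolic space $\mathbb{H}^{\rho-1}$. The nef cone sits inside $\overline{C^+}$, and I would first show $\Nef(X)$ is locally rational polyhedral away from the ``round'' part of the boundary: every nef class with positive self-intersection that is not ample lies on finitely many walls, each orthogonal to a $(-2)$-curve, because a curve $C$ with $C^2<0$ on a Calabi-Yau surface has $C^2=-2$ (adjunction, since $K_X\cdot C=0$), and such curves form a locally finite family in $C^+$. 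So the walls of $\Nef(X)$ interior to $C^+$ are the hyperplanes $C^\perp$ for the $(-2)$-curves $C$, and $\Nef(X)$ is a (possibly infinite-sided) Coxeter-type chamber in $\mathbb{H}^{\rho-1}$.

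Next I would bring in the automorphisms. The group $\Aut(X)$ acts on $N^1(X)$ preserving the intersection form and the ample cone, hence acts on $\mathbb{H}^{\rho-1}$ by isometries; its image $\Gamma\subset O(N^1(X))$ is a discrete group (it preserves the lattice $N^1(X)\cap H^2(X,\Z)$), so $\Gamma$ acts properly discontinuously on $\mathbb{H}^{\rho-1}$. The key input from lattice theory is a theorem of the Vinberg–Looijenga type: if $\Gamma$ is a subgroup of the isometry group of $\mathbb{H}^n$ preserving an arithmetic lattice, and $D\subset \overline{\mathbb{H}^n}$ is a $\Gamma$-invariant convex cone which is the intersection of a locally finite family of half-spaces, and $\Gamma$ contains ``enough'' reflections, then $\Gamma$ acts on $D$ with a rational polyhedral fundamental domain. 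I would quote the precise form from Looijenga's paper on cones of this shape; the essential hypothesis is that the reflection subgroup $W\subset\Gamma$ generated by reflections in the walls of $\Nef(X)$ has finite index, equivalently that $\Aut(X)$ modulo the subgroup generated by reflections in $(-2)$-curves is finite.

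The main obstacle is establishing exactly that finiteness: that $\Nef(X)/\Aut(X)$ is rational polyhedral, not merely that the rational part of the nef cone is tiled nicely by the $(-2)$-reflections. For K3 surfaces this is the content of Sterk's original argument and rests on two facts I would cite: the Global Torelli theorem, which identifies $\Aut(X)$ with the group of Hodge isometries of $H^2(X,\Z)$ preserving the ample cone up to finite index, so that the problem becomes purely arithmetic; and a finiteness theorem for arithmetic reflection groups (essentially that an arithmetic hyperbolic reflection group has a finite-volume, hence rational polyhedral, fundamental polytope after passing to the full arithmetic group). For abelian surfaces $\Aut(X)$ modulo translations is already commensurable with an arithmetic group acting on $\mathbb{H}^{\rho-1}$ with no $(-2)$-curves in the way, and the same finite-volume statement applies directly; quotients by finite groups follow by descent. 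I would then assemble these: on $C^+$, $\Nef(X)$ is cut out by the $(-2)$-walls; $\Aut(X)$ acts with finitely many orbits of such walls by the arithmetic finiteness; choosing a point deep in a chamber and taking its $\Aut(X)$-orbit-closure Dirichlet-style domain gives a rational polyhedral fundamental domain, with the one subtlety that pieces of $\partial C^+$ (the irrational ``round'' boundary) must be shown to contribute only through rational supporting hyperplanes — which holds because any such boundary face is spanned by classes of curves or is acted on by a parabolic or hyperbolic element whose fixed locus is rational.
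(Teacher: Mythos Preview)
Your setup of the hyperbolic model and the description of $\Nef(X)$ as a Weyl chamber bounded by $(-2)$-hyperplanes is fine, and you correctly name the two essential inputs (Torelli, arithmetic reduction theory). But the way you assemble them contains a genuine confusion that makes the argument break.

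You write that the reflection group $W$ generated by reflections in the walls of $\Nef(X)$ is a subgroup of $\Gamma=\Aut^*(X)$, and that the key hypothesis is that $W$ have finite index in $\Gamma$. This is backwards. The reflection $s_C$ in a $(-2)$-curve $C$ sends the class $[C]$ to $-[C]$, hence does \emph{not} preserve the ample cone and is \emph{not} induced by any automorphism of $X$. In fact $W$ and $\Aut^*(X)$ meet only in the identity inside $O(\Pic(X))$: the group $W$ acts simply transitively on the set of Weyl chambers in the positive cone, while $\Aut^*(X)$ fixes the particular chamber $\Nef(X)$. The correct relation, supplied by the Torelli theorem, is a semidirect product (up to finite index)
\[
O(\Pic(X))\ \doteq\ \Aut^*(X)\ltimes W,
\]
so $\Aut^*(X)$ is identified with the \emph{quotient} $O(\Pic(X))/W$, not with a group containing $W$. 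Your phrase ``$\Aut(X)$ modulo the subgroup generated by reflections in $(-2)$-curves is finite'' therefore has no content.

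Once this is straightened out, the argument runs as in the paper: the arithmetic group $O(\Pic(X))$ acts on the positive cone with a rational polyhedral fundamental domain $D$ (Minkowski--Siegel reduction theory); the normal subgroup $W$ acts on the positive cone with fundamental domain $\Nef(X)$; hence the complementary factor $\Aut^*(X)$ acts on $\Nef(X)$ with fundamental domain $D$ (after translating $D$ into $\Nef(X)$ by an element of $W$). Note in particular that there is no need to prove $W$ has finite index or finite covolume in anything---it typically does not---and the ``arithmetic reflection group'' finiteness you invoke is not the relevant statement. The boundary issues you worry about at the end are absorbed into the reduction-theory statement for $O(\Pic(X))$; no separate analysis of parabolic fixed points is required.
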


\begin{remark}
For any variety $X$, if $\Nef(X)$ is rational polyhedral,
then the group $\Aut^*(X):=\im(\Aut(X)\arrow GL(N^1(X)))$ is finite.
This is easy: the group $\Aut^*(X)$ must permute the set consisting of the smallest integral
point on each extremal ray of $\Nef(X)$. Sterk's theorem implies the remarkable
statement that
the converse is also true for Calabi-Yau surfaces. That is, if the cone
$\Nef(X)$ is not rational polyhedral, then $\Aut^*(X)$ must be infinite.
Note that $\Aut^*(X)$ coincides with the discrete part of the automorphism
group of $X$ up to finite groups, because $\ker(\Aut(X)\arrow GL(N^1(X)))$
is an algebraic group and hence has only finitely many connected
components.
\end{remark}

Sterk's theorem should generalize to Calabi-Yau varieties of any dimension
(the Morrison-Kawamata cone conjecture). But in dimension 2,
we can visualize it better, using hyperbolic geometry.

Indeed, let $X$ be any smooth projective surface. The intersection form
on $N^1(X)$ always has signature $(1,n)$ for some $n$ (the Hodge
index theorem). So $\{ x\in N^1(X): x^2>0\}$ has two connected
components, and the positive cone
$\{x\in N^1(X): x^2>0\text{ and }H\cdot x>0\}$
is the standard round cone. As a result, we can identify
the quotient of the positive cone by $\R^{>0}$
with {\it hyperbolic $n$-space}. One way to see this is that the negative of
the Lorentzian metric on $N^1(X)=\R^{1,n}$ restricted to the quadric
$\{x^2=1\}$ is a Riemannian metric with curvature $-1$.

For any projective surface $X$, $\Aut(X)$ preserves the intersection
form on $N^1(X)$. So $\Aut^*(X)$ is always a group of isometries
of hyperbolic $n$-space, where $n=\rho(X)-1$.

By definition, two groups $G_1$ and $G_2$ are {\it commensurable},
written $G_1\doteq G_2$, if some finite-index subgroup of $G_1$
is isomorphic to a finite-index subgroup of $G_2$. Since the groups
we consider are all virtually torsion-free, we are free to replace
a group $G$ by $G/N$ for a finite normal subgroup $N$ (that is,
$G$ and $G/N$ are commensurable).

\subsection{Examples}
\label{coneexample}

For an abelian surface $X$ with Picard number at least 3, the cone
$\Nef(X)$ is round, and so $\Aut^*(X)$ must be infinite by Sterk's theorem.
(For abelian surfaces, the possible automorphism
groups were known long before \cite[section 21]{MumfordAV}.)

\begin{floatingfigure}[r]{0.3\textwidth}
\centering
\includegraphics[scale=0.15]{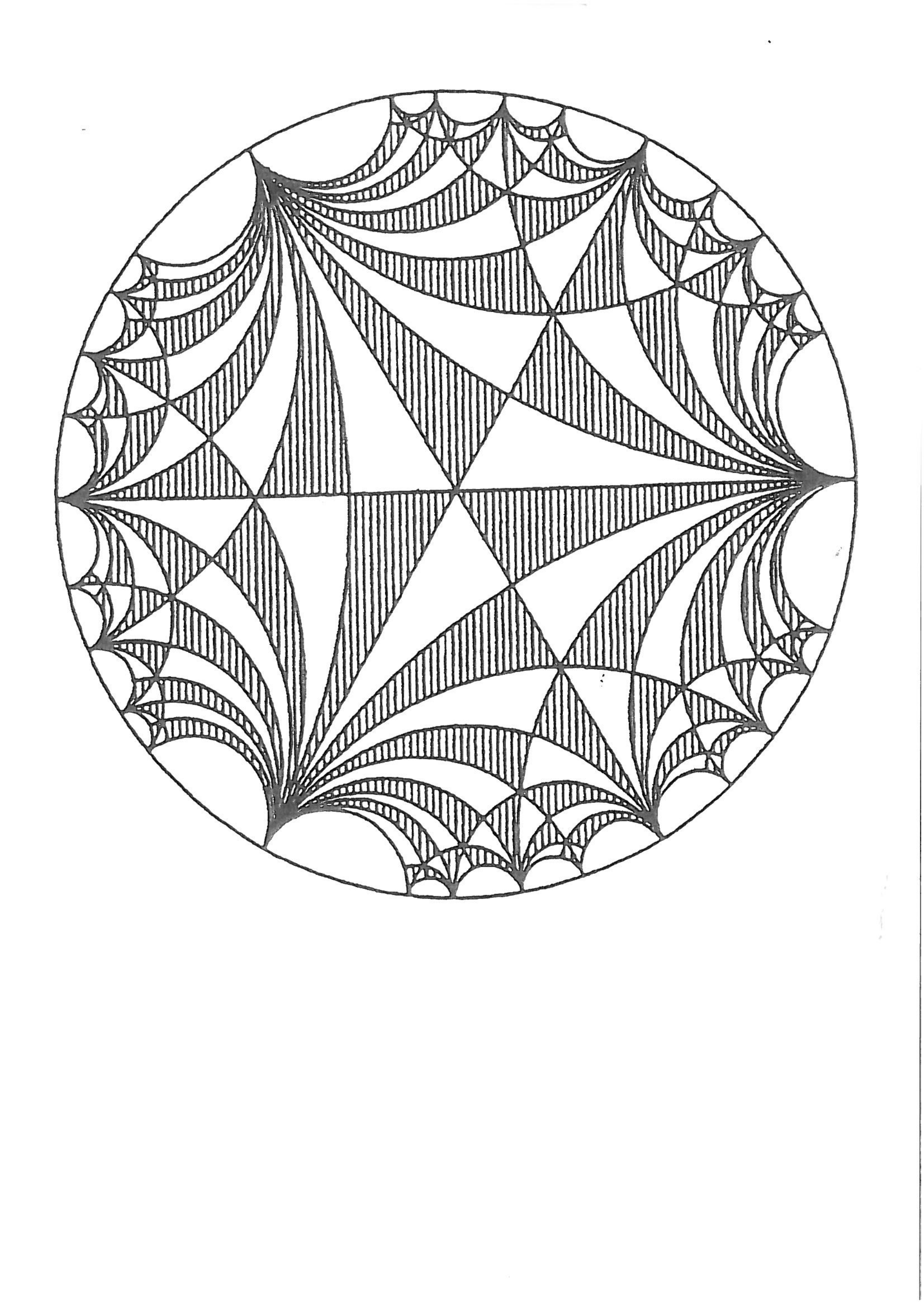}
\end{floatingfigure}
For example, let $X=E\times E$ with $E$ an elliptic curve (not having
complex multiplication). Then $\rho(X)=3$, with $N^1(X)$ spanned by
the curves $E\times 0$, $0 \times E$, and the diagonal
$\Delta_E$ in $E\times E$. So $\Aut^*(X)$ must be infinite.
In fact,
$$\Aut^*(X)\cong PGL(2,\Z).$$
Here $GL(2,\Z)$ acts
on $E\times E$ as on the direct sum of any abelian group with itself.
This agrees with Sterk's theorem, which says that $\Aut^*(X)$ acts on the
hyperbolic plane with a rational polyhedral fundamental domain;
a fundamental domain for $PGL(2,\Z)$
acting on the hyperbolic plane (not preserving orientation) is given
by any of the triangles in the figure.

For a K3 surface, the cone $\Nef(X)$ may or may not be the whole
positive cone. For any projective surface, the nef cone
modulo scalars is a convex subset of hyperbolic space. A finite
polytope in hyperbolic space (even if some vertices are at infinity)
has finite volume. So Sterk's theorem implies that, for a Calabi-Yau
surface, $\Aut^*(X)$ acts with {\it finite covolume }on the convex
set $\Nef(X)/\R^{>0}$ in hyperbolic space.

\begin{floatingfigure}[r]{0.3\textwidth}
\centering
\includegraphics[scale=0.3]{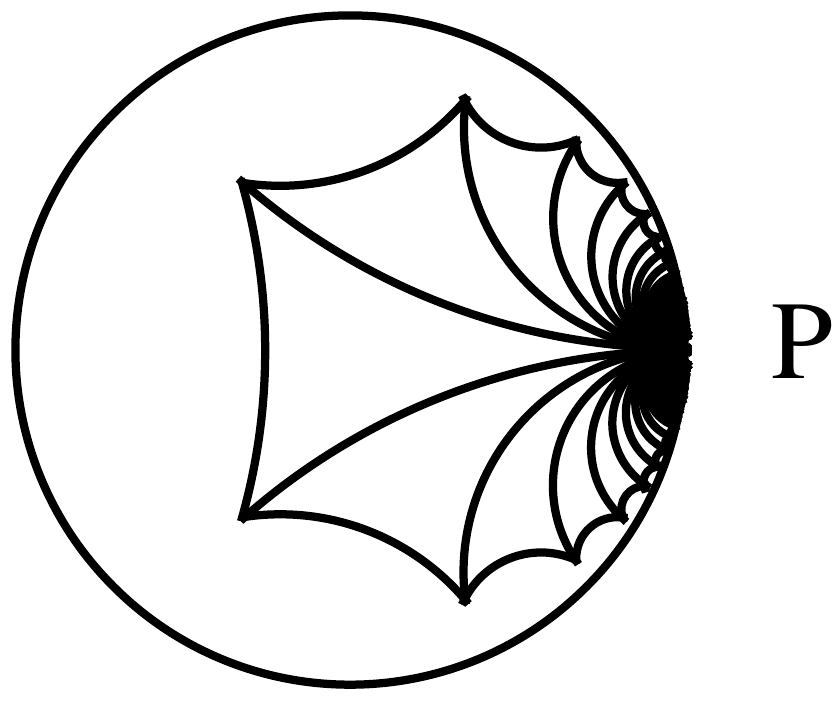}
\end{floatingfigure}
For example, let $X$ be a K3 surface such that $\Pic(X)$ is isomorphic
to $\Z^3$
with intersection form
$$\begin{pmatrix}
0&1&1\\
1& -2 &0\\
1&0& -2
\end{pmatrix}.$$
Such a surface exists, since Nikulin showed that 
every even lattice of rank at most 10 with
signature $(1,*)$ is the Picard lattice of some complex
projective K3 surface \cite[section 1, part 12]{Nikulinint}.
Using the ideas of section
\ref{sterk}, one computes that the nef cone of $X$ modulo scalars is the convex
subset of the hyperbolic plane shown in the figure. The surface $X$
has a unique elliptic fibration $X\arrow \P^1$,
given by a nef line bundle $P$
with $\langle P,P\rangle =0$. The line bundle $P$ appears in the figure
as the point where $\Nef(X)/\R^{>0}$ meets the circle at infinity.
And $X$ contains infinitely many $(-2)$-curves, whose orthogonal complements
are the codimension-1 faces of the nef cone. Sterk's theorem says
that $\Aut(X)$ must act on the nef cone with rational polyhedral fundamental
domain. In this example, one computes that
$\Aut(X)$ is commensurable with the Mordell-Weil
group of the elliptic fibration ($\Pic^0$ of the generic fiber
of $X\arrow \P^1$), which is isomorphic to $\Z$. One also finds 
that all the $(-2)$-curves in $X$
are sections of the elliptic fibration. The Mordell-Weil group moves
one section to any other section, and so it divides the nef cone 
into rational polyhedral cones as in the figure.

\section{Outline of the proof of Sterk's theorem}
\label{sterk}

We discuss the proof of Sterk's theorem for K3 surfaces.
The proof for abelian surfaces is the same, but simpler
(since an abelian surface contains no $(-2)$-curves),
and these cases imply the case of quotients of K3 surfaces
or abelian surfaces by a finite group. For details,
see Kawamata \cite{Kawamata},
based on the earlier papers \cite{Sterk, Namikawa}.

The proof of Sterk's theorem for K3 surfaces relies on the Torelli theorem
of Piatetski-Shapiro and Shafarevich.
That is, any isomorphism of Hodge structures between
two K3s is realized by an isomorphism of K3s if it
maps the nef cone into the nef cone. In particular,
this lets us construct automorphisms of a K3 surface
$X$: up to finite index,
every element of the integral orthogonal group $O(\Pic(X))$
that preserves the cone $\Nef(X)$ is realized by an
automorphism of $X$. (Here $\Pic(X)\cong \Z^{\rho}$, and the intersection
form has signature $(1,\rho(X)-1)$ on $\Pic(X)$.)

Moreover, $\Nef(X)/\R^{>0}$ is a very special convex set in hyperbolic space
$H_{\rho-1}$: it is the closure of a Weyl chamber
for a discrete reflection group $W$ acting on $H_{\rho-1}$. We can define
$W$ as the group generated by all reflections in vectors $x\in \Pic(X)$
with $x^2=-2$, or (what turns out to be the same) the group generated
by reflections in all $(-2)$-curves in $X$. By the first description, $W$
is a {\it normal }subgroup of $O(\Pic(X))$. In fact, up to finite groups,
$O(\Pic(X))$ is the semidirect product group
$$O(\Pic(X))\doteq \Aut(X)\ltimes W.$$

By general results on arithmetic groups going back to Minkowski,
$O(\Pic(X))$ acts on the positive cone in $N^1(X)$ with a rational
polyhedral fundamental domain $D$.  (This fundamental domain is not
at all unique.) And the reflection group $W$ acts on the positive cone
with fundamental domain the nef cone of $X$. Therefore, after we arrange
for $D$ to be contained in the nef cone,
$\Aut(X)$ must act on the nef cone
with the same rational polyhedral fundamental domain $D$, up to finite index.
Sterk's theorem is proved.

\section{Non-arithmetic automorphism groups}

In this section, we show for the first time that
the discrete part of the automorphism group of a smooth
projective variety need not be commensurable with an arithmetic group.
This answers a question raised by Mazur \cite[section 7]{Mazur}.
Corollary \ref{non-arithK3} applies to a large class of K3 surfaces.

An {\it arithmetic group }is
a subgroup of some $\Q$-algebraic group $H_{\Q}$ which is commensurable
with $H(\Z)$ for some integral structure on $H_{\Q}$; this condition
is independent of the integral structure \cite{Serre}. We view
arithmetic groups as abstract groups, not as subgroups of a fixed
Lie group.

Borcherds gave an example of a K3 surface whose automorphism
group is not isomorphic to an arithmetic group
\cite[Example 5.8]{Borcherds}. But, as he says, the automorphism group
in his example has a nonabelian free subgroup of finite index, and so
it is commensurable with the arithmetic group $SL(2,\Z)$.
Examples of K3 surfaces with explicit generators
of the automorphism group
have been given by Keum, Kondo, Vinberg, and others;
see Dolgachev \cite[section 5]{DolgachevReflection} for a survey. 

Although they need not be commensurable with arithmetic groups,
the automorphism groups $G$ of K3 surfaces 
are very well-behaved in terms of geometric group theory.
More generally this is true for the discrete part $G$ of
the automorphism group of a surface
$X$ which can be given the structure of a klt Calabi-Yau pair, as defined
in section \ref{klt}.
Namely, $G$ acts cocompactly on a CAT(0) space
(a precise notion of a metric space with nonpositive curvature).
Indeed, the nef cone modulo scalars is a closed convex subset of
hyperbolic space, and thus a CAT($-1$) space
\cite[Example II.1.15]{BridsonH}.
Removing a $G$-invariant set of disjoint open horoballs gives a
CAT(0) space on which $G$ acts properly and cocompactly, by the proof of
\cite[Theorem II.11.27]{BridsonH}. This implies all the finiteness
properties one could want, even though $G$ need not be arithmetic.
In particular: $G$
is finitely presented,
a finite-index subgroup of $G$ has a finite CW complex as classifying space,
and $G$ has only finitely many conjugacy classes of finite subgroups
\cite[Theorem III.$\Gamma$.1.1]{BridsonH}.

For smooth projective varieties in general,
very little is known. For example,
is the discrete part $G$ of the automorphism group always finitely
generated? The question is open even for smooth projective rational
surfaces. About the only thing one can say for an arbitrary
smooth projective variety $X$ is that
$G$ modulo a finite group injects into $GL(\rho(X),\Z)$,
by the comments in section \ref{coneconjecture}.

In Theorem \ref{lattice},
a {\it lattice }means a finitely generated free abelian group
with a symmetric bilinear form that is nondegenerate $\otimes\Q$.

\begin{theorem}
\label{lattice}
Let $M$ be a lattice of signature $(1,n)$ for $n\geq 3$. Let
$G$ be a subgroup of infinite index in $O(M)$. Suppose
that $G$ contains $\Z^{n-1}$ as a subgroup of infinite index.
Then $G$ is not commensurable with an arithmetic group.
\end{theorem}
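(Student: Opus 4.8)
The plan is to derive a contradiction from the assumption that $G$ is commensurable with an arithmetic group, by exploiting the interplay between the geometry of $G$ acting on hyperbolic $n$-space $H_n$ and the structure theory of arithmetic groups. Since $G$ sits in $O(M)$ with $M$ of signature $(1,n)$, $G$ acts by isometries on $H_n$, and the hypotheses tell us two things: $G$ has infinite index in the lattice $O(M)$ (so $G$ is, morally, ``thin''), and $G$ contains a copy of $\Z^{n-1}$ of infinite index in $G$ itself. The first observation is that the $\Z^{n-1}$ inside $O(M)$ must consist of parabolic isometries fixing a common point $\xi$ on the boundary $\partial H_n$: a free abelian group of rank $n-1 \geq 2$ cannot contain a hyperbolic (loxodromic) element, since the centralizer of a hyperbolic isometry in $O(1,n)$ is virtually $\R \times O(n-1)$, which contains no $\Z^{n-1}$ for $n\geq 3$ once one accounts for the compact factor; and it cannot be elliptic since it is infinite. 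So $\Z^{n-1}$ is a lattice in the horospherical unipotent group at $\xi$, which is isomorphic to $\R^{n-1}$.

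Next I would invoke the structure theory of arithmetic groups. Suppose $G$ is commensurable with an arithmetic group $\Gamma \subset H(\Z)$. Arithmetic groups have a restrictive structure for their virtually abelian (in particular unipotent/parabolic) subgroups: a maximal abelian subgroup of unipotent type in an arithmetic group of $\Q$-rank $r$ has rank controlled by $r$ and the dimension of the unipotent radical of a minimal parabolic $\Q$-subgroup. The key point is that if $\Gamma$ contains a ``large'' abelian subgroup $A \cong \Z^{n-1}$, then either $\Gamma$ has high $\Q$-rank, or $A$ must be commensurated by a large piece of $\Gamma$ — and in an arithmetic group the commensurator of a maximal parabolic $\Z^{n-1}$ is itself (up to finite index) a maximal parabolic subgroup $P$, which has infinite index in $\Gamma$ but is itself a lattice in a parabolic subgroup, hence is finitely generated with $A$ of \emph{finite} index in its unipotent part but $P/A$ infinite. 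The contradiction I am aiming for: in $O(M)$ the stabilizer of the boundary point $\xi$ is a maximal parabolic subgroup $P_\xi$, and $G \cap P_\xi$ is a group containing $\Z^{n-1}$ with infinite index (by hypothesis $G$ contains $\Z^{n-1}$ with infinite index, and one checks the extra elements of $G$ moving the $\Z^{n-1}$ around still interact with $\xi$ in a controlled way). I would show that commensurability with an arithmetic group forces $G$ to ``fill out'' a parabolic subgroup or to have finite index in $O(M)$, either of which contradicts a hypothesis.

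More precisely, here is the mechanism I expect to be decisive. If $G$ is an arithmetic group, then by Margulis (for higher rank) or by the theory of lattices in rank-one groups, a lattice $\Delta$ in a semisimple Lie group that contains $G$ with finite index is either itself a lattice in $O(1,n)$ (the rank-one case) or a lattice in a higher-rank group. In the rank-one case, $\Delta$ is a lattice in $\mathrm{Isom}(H_n)$ acting with finite covolume, and its cusp subgroups are virtually $\Z^{n-1}$; the subgroup $G$ of infinite index in $\Delta$ then has infinite index in $O(M) \supset \Delta$, which is consistent, but the condition that $G$ contains $\Z^{n-1}$ of infinite index in $G$ forces $G$ to contain a full cusp group yet be much bigger — and a geometrically infinite or infinite-index subgroup of a lattice in $H_n$ containing a full rank-$(n-1)$ parabolic is itself forced to be a cusp subgroup up to finite index (here one uses that $n-1 \geq 2$ so the limit set considerations of covering space theory apply), giving $\Z^{n-1}$ finite index in $G$, a contradiction. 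In the higher-rank case, $G$ arithmetic of higher rank acting on $H_n$ factors through a rank-one quotient or is finite on $H_n$, contradicting that $G \subset O(M)$ faithfully with $\Z^{n-1}$ of infinite index. The step I expect to be the main obstacle is the rank-one reduction: ruling out that $G$ is an infinite-index, infinitely-generated-looking subgroup of a hyperbolic lattice that nonetheless ``accidentally'' contains a maximal parabolic. I would handle this by a covering-space / limit-set argument — if $G$ contains a maximal parabolic $\Z^{n-1}$ fixing $\xi$, then the limit set of $G$ either is all of $\partial H_n$ (forcing, via a ping-pong/Tits-alternative plus covolume argument, that $G$ has finite index in the lattice, hence in $O(M)$) or is a proper closed invariant subset containing $\xi$ as a bounded parabolic fixed point, and geometric finiteness considerations then pin $G$ down to be virtually $\Z^{n-1}$. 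Either branch contradicts a hypothesis, completing the proof.
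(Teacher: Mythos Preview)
Your proposal has a genuine gap at the central step. You assume that if $G$ is commensurable with an arithmetic group $\Delta = H(\Z)$, then in the rank-one case $\Delta$ is a lattice in $\mathrm{Isom}(H_n) = O(1,n)$, and you even write $O(M) \supset \Delta$. Neither is justified: commensurability is an abstract isomorphism of finite-index subgroups, so $\Delta$ lives in some \emph{a priori} unrelated $\Q$-algebraic group $H_{\Q}$, and $H(\R)$ could be $O(1,m)$ for $m \neq n$, or $U(1,m)$, or $Sp(1,m)$, or a product, or not semisimple at all. There is no embedding of $\Delta$ into $O(M)$ on which to run a limit-set or geometric-finiteness argument, and your dichotomy ``full limit set hence finite index in $O(M)$, or virtually $\Z^{n-1}$'' is being applied to a group that does not sit inside $O(M)$ in the first place. (You also write ``the subgroup $G$ of infinite index in $\Delta$,'' contradicting your own earlier finite-index assertion; this suggests the conflation of the abstract $\Delta$ with $O(M)$ has propagated through the argument.)

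The paper closes exactly this gap by a systematic algebraic reduction. One first extracts commensurability invariants from $G \subset O(M)$: every solvable subgroup of $G$ is virtually abelian; the centralizer of any $\Z \subset G$ is virtually $\Z^a$ with $a \leq n-1$; $G$ is not virtually abelian; and $\mathrm{vcd}(G) \leq n-1$. These constraints force $H_{\Q}$ to be reductive (the unipotent radical must be abelian with trivial conjugation action, hence central, hence trivial by the centralizer constraint), and then $\Q$-simple. The decisive tool is then Prasad's theorem, which compares a lattice in $H(\R)$ abstractly isomorphic to a discrete subgroup of $O(n,1)$: it gives $\dim(H(\R)/K_H) \leq n-1$, or else $\dim(H(\R)/K_H)=n$ together with a surjection $H(\R) \to PSL(2,\R)$. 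The first alternative is killed because $\Z^{n-1}$ would then act cocompactly on $H(\R)/K_H$ and so have finite index in $\Gamma$. In the second alternative $H = R_{K/\Q} SL(2)$ for a number field $K$; the virtually-abelian-solvable constraint forces $o_K^*$ to be finite, hence $K=\Q$, whence $\Gamma$ is virtually free --- impossible since $G$ contains $\Z^{n-1}$ with $n\geq 3$. Your ``higher-rank case'' is absorbed into the Prasad dimension bound, and your ``rank-one case'' does not land in $O(1,n)$ but, after the reduction, in a form of $SL(2)$.
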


\begin{corollary}
\label{non-arithK3}
Let $X$ be a K3 surface over any field, with Picard number
at least 4. Suppose that $X$ has an elliptic fibration with
no reducible fibers and a second elliptic fibration with
Mordell-Weil rank positive. (For example, the latter property holds if the
second fibration also has no reducible fibers.)
Suppose also that $X$ contains a $(-2)$-curve.
Then the automorphism group of $X$ is a discrete group that is not
commensurable with an arithmetic group.
\end{corollary}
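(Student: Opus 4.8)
The plan is to deduce the corollary from Theorem \ref{lattice} by taking $M = \Pic(X)$ and $G = \Aut^*(X)$, so I must verify the three hypotheses of that theorem: that $G$ has infinite index in $O(M)$, that $G$ contains a copy of $\Z^{n-1}$ with $n = \rho(X)-1 \ge 3$, and that this $\Z^{n-1}$ sits inside $G$ with infinite index. The signature condition is automatic since $\Pic(X)$ has signature $(1,\rho(X)-1)$ with $\rho(X)\ge 4$. The identification of $\Aut^*(X)$ with $G$ up to finite index, and the fact that $\Aut(X)\to O(\Pic(X))$ has finite kernel, is exactly the content recalled in section \ref{coneconjecture} and section \ref{sterk}, so I am free to work with $\Aut^*(X)$.

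First I would establish the $\Z^{n-1}$ subgroup. The hypothesis says $X$ carries an elliptic fibration $\pi\colon X\to\P^1$ with no reducible fibers; then by the Shioda--Tate formula the Mordell--Weil rank of $\pi$ equals $\rho(X)-2 = n-1$, since the trivial lattice (spanned by the zero section and fiber components) has rank exactly $2$ when there are no reducible fibers. Translation by sections of $\pi$ gives automorphisms of $X$, so the Mordell--Weil group $\mathrm{MW}(\pi)$ embeds into $\Aut(X)$, and hence (modulo the finite torsion of $\mathrm{MW}(\pi)$ and the finite kernel) we get $\Z^{n-1}\hookrightarrow G$. Next I would show $G$ has infinite index in $O(M)$: this is where the $(-2)$-curve hypothesis enters. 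By Sterk's theorem / the structure recalled in section \ref{sterk}, up to finite groups $O(\Pic(X)) \doteq \Aut(X)\ltimes W$, where $W$ is the Weyl group generated by reflections in $(-2)$-vectors. Since $X$ contains a $(-2)$-curve, $W$ is infinite (it contains at least one nontrivial reflection, and in a hyperbolic lattice of rank $\ge 3$ the normal closure of a reflection is infinite — more directly, the reflection group acting on the nef cone has the nef cone as a proper fundamental domain, and $\Aut^*(X)$ preserves the nef cone, so $\Aut^*(X)\cap W$ is finite while $W$ is infinite). Hence $[O(M):G]$ is infinite.

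The remaining point — and the one I expect to be the main obstacle — is that $\Z^{n-1}$ has infinite index in $G$. This is exactly where the second elliptic fibration with positive Mordell--Weil rank is needed. If the Mordell--Weil group of $\pi$ had finite index in $\Aut^*(X)$, then $\Aut^*(X)$ would be virtually abelian, hence would act on hyperbolic $n$-space with a virtually abelian image; but a virtually abelian group of isometries of $H_n$ cannot act with finite covolume on a convex set of dimension $n\ge 2$ (its limit set is too small — a virtually $\Z^{n-1}$ group fixes a point at infinity or preserves a proper totally geodesic subspace, so its fundamental domain on the nef cone cannot be rational polyhedral of the right dimension). Concretely, the second fibration $\pi'$ supplies automorphisms (translations by its sections, composed with the involution negating $\pi'$) whose action on $N^1(X)$ does not commute with $\mathrm{MW}(\pi)$ and generates, together with $\mathrm{MW}(\pi)$, a non-virtually-abelian subgroup; since $\pi'$ has positive Mordell--Weil rank, it contributes a genuinely new direction. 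I would make this precise by noting that $\mathrm{MW}(\pi)$ fixes the point $[F_\pi]$ on the boundary of hyperbolic space (the class of the fiber, which is isotropic and $\pi$-nef), whereas the automorphisms coming from $\pi'$ move $[F_\pi]$ (because $[F_{\pi'}]$ is a distinct boundary point and the second fibration's symmetries do not preserve $[F_\pi]$); an infinite-index overgroup of a parabolic $\Z^{n-1}$ that does not fix its fixed point at infinity cannot be commensurable with $\Z^{n-1}$. Hence $\Z^{n-1}$ has infinite index in $G$, Theorem \ref{lattice} applies, and $\Aut(X)$ — which we have already observed is a discrete group — is not commensurable with an arithmetic group.
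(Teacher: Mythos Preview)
Your overall strategy matches the paper's: set $M=\Pic(X)$, $G=\Aut^*(X)$, and verify the three hypotheses of Theorem~\ref{lattice}. The use of Shioda--Tate for the first fibration, and the parabolic fixed-point idea for the second, are exactly what the paper does. Two points deserve comment, one a genuine gap and one a detour.

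The gap is in your argument that $W$ is infinite. Neither of your two justifications is complete. The claim that ``in a hyperbolic lattice of rank $\geq 3$ the normal closure of a reflection is infinite'' is true but not obvious; it requires knowing that the $O(M)$-orbit of a $(-2)$-vector is infinite, which in turn needs an argument about Zariski density or covolumes that you have not supplied. Your alternative, that the nef cone is a \emph{proper} fundamental domain for $W$, does not by itself force $W$ to be infinite: a priori there could be finitely many chambers. The paper closes this gap much more directly, using the very hypotheses already in hand: the Mordell--Weil group $\Z^{n-1}$ of the first fibration translates the given $(-2)$-curve $C$ to infinitely many distinct $(-2)$-curves, because any curve with a finite $\Z^{n-1}$-orbit must lie in a fiber, and the fibers are irreducible of self-intersection $0$. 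Infinitely many $(-2)$-curves give infinitely many distinct reflections in $W$, so $W$ is infinite; then $G\cap W=\{1\}$ (since $G$ preserves one chamber) gives $[O(M):G]=\infty$. This argument is elementary and, importantly, works over any field---whereas your appeal to the semidirect product decomposition from section~\ref{sterk} leans on Torelli, which is stated there only over $\C$.

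The detour is your first pass at showing $\Z^{n-1}$ has infinite index in $G$, via ``a virtually abelian group cannot act with finite covolume on the nef cone.'' This implicitly invokes Sterk's theorem, again a complex-analytic input not available over an arbitrary field. Your second pass---$\mathrm{MW}(\pi)$ fixes the boundary point $[F_\pi]$ while nontrivial elements of $\mathrm{MW}(\pi')$ fix only $[F_{\pi'}]\neq[F_\pi]$---is the right idea and is precisely the paper's argument: the two parabolic subgroups $\Z^{n-1}$ and $\Z^a$ (with $a>0$) fix distinct points at infinity, hence intersect trivially, and a subgroup of $G$ isomorphic to $\Z^{n-1}$ that meets an infinite subgroup of $G$ trivially cannot have finite index in $G$. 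You should drop the covolume detour and lead with this.
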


\begin{example}
\label{non-arithex}
Let $X$ be the double cover of $\P^1\times \P^1
=\{ ([x,y],[u,v])\}$ ramified along the following curve of degree
$(4,4)$:
\begin{multline*}
0 = 16x^4u^4+xy^3u^4+y^4u^3v-40x^4u^2v^2-x^3yu^2v^2-x^2y^2uv^3+33x^4v^4
-10x^2y^2v^4+y^4v^4.
\end{multline*}
Then $X$ is a K3 surface whose automorphism group (over $\Q$,
or over $\overline{\Q}$) is not commensurable
with an arithmetic group.
\end{example}

\begin{proof}[Proof of Theorem \ref{lattice}]
We can view $O(M)$
as a discrete
group of isometries of hyperbolic $n$-space. Every solvable subgroup of
$O(M)$ is virtually abelian 
\cite[Corollary II.11.28 and Theorem III.$\Gamma$.1.1]{BridsonH}.
By the classification of isometries of hyperbolic
space as elliptic, parabolic, or hyperbolic \cite{Vinberg}, the centralizer
of any subgroup $\Z\subset O(M)$ is either commensurable with $\Z$
(if a generator $g$ of $\Z$ is hyperbolic) or commensurable with
$\Z^a$ for some $a\leq n-1$
(if $g$ is parabolic). These properties pass to the subgroup $G$ of $O(M)$.
Also, $G$ is not virtually abelian, because
it contains $\Z^{n-1}$ as a subgroup of infinite index, and $\Z^{n-1}$
is the largest abelian subgroup of $O(M)$ up to finite index.
Finally, $G$ acts properly and not cocompactly on hyperbolic $n$-space,
and so $G$ has virtual cohomological dimension at most $n-1$.

Suppose that $G$ is commensurable
with some arithmetic group $\Gamma$. 
Thus $\Gamma$ is a subgroup of some $\Q$-algebraic group
$H_{\Q}$, and $\Gamma$ is commensurable with $H(\Z)$ for some integral
structure on $H_{\Q}$. We freely change $\Gamma$ by finite
groups in what follows. So we can assume that $H_{\Q}$ is connected.
After replacing $H_{\Q}$ by the kernel
of some homomorphism to a product of copies of the multiplicative
group $G_m$ over $\Q$, we can assume that $\Gamma $ is a {\it lattice }in
the real group $H(\R)$ (meaning that $\vol(H(\R)/\Gamma)<\infty$),
by Borel and Harish-Chandra \cite{BorelH}.

Every connected $\Q$-algebraic group $H_{\Q}$ is a semidirect product
$R_{\Q}\ltimes U_{\Q}$ where $R_{\Q}$ is reductive and $U_{\Q}$ is unipotent
\cite[Theorem 5.1]{BorelS}.
By the independence of the choice of integral structure, we can assume
that $\Gamma= R(\Z)\ltimes U(\Z)$ for some arithmetic subgroups
$R(\Z)$ of $R_{\Q}$ and $U(\Z)$ of $U_{\Q}$. Since every solvable
subgroup of $G$ is virtually abelian, $U_{\Q}$ is abelian,
and $U(\Z)\cong \Z^a$ for
some $a$. The conjugation action of $R_{\Q}$ on $U_{\Q}$ must be trivial;
otherwise $\Gamma$ would contain a solvable group of the form
$\Z\ltimes \Z^a$ which is not virtually abelian. Thus $\Gamma=
\Z^a\times R_{\Z}$. But the properties of centralizers in $G$
imply that $G$ does not contain the product of $\Z$ with any infinite
nonabelian group. Therefore, $a=0$ and $H_{\Q}$ is reductive.

Modulo finite groups, the reductive group $H_{\Q}$ is a product
of $\Q$-simple groups and tori, and $\Gamma$ is a corresponding product
modulo finite groups. Since $G$ does not contain the product of $\Z$
with any infinite nonabelian group, $H_{\Q}$ must be $\Q$-simple.
Since the lattice $\Gamma$ in $H(\R)$ is isomorphic to the discrete subgroup
$G$ of $O(M)\subset O(n,1)$ (after passing to finite-index subgroups),
Prasad showed that $\dim(H(\R)/K_H)\leq
\dim(O(n,1)/O(n))=n$, where $K_H$ is a maximal compact subgroup
of $H(\R)$. Moreover, since $G$ has infinite index in $O(M)$ 
and hence infinite covolume in $O(n,1)$,
Prasad showed that either $\dim(H(\R)/K_H)\leq n-1$ or else
$\dim(H(\R)/K_H)=n$ and there is a homomorphism from $H(\R)$ onto
$PSL(2,\R)$ \cite[Theorem B]{Prasad}.

Suppose that $\dim(H(\R)/K_H)\leq n-1$. We know that $\Gamma$ acts properly
on $H(\R)/K_H$ and that $\Gamma$ contains $\Z^{n-1}$. The quotient
$\Z^{n-1}\backslash H(\R)/K_H$ is a manifold of dimension
$n-1$ with the homotopy type of the $(n-1)$-torus (in particular,
with nonzero cohomology in dimension $n-1$), and so it must
be compact. So $\Z^{n-1}$ has finite index in $\Gamma$, contradicting
our assumption.

So $\dim(H(\R)/K_H)=n$ and $H(\R)$ maps onto $PSL(2,\R)$.
We can assume that $H_{\Q}$ is simply connected. Since
$H$ is $\Q$-simple, $H$ is equal to the restriction of scalars
$R_{K/\Q}L$ for some number field $K$ and some absolutely simple
and simply connected
group $L$ over $K$ \cite[section 3.1]{Tits}. Since $H(\R)$ maps onto
$PSL(2,\R)$, $L$ must be a form of $SL(2)$.
We know that $G\cong \Gamma$
has virtual cohomological dimension at most $n-1$, and so $\Gamma$
must be a non-cocompact subgroup of $H(\R)$. Equivalently, $H$
has $\Q$-rank greater than zero \cite{BorelH},
and so $\rank_K(L)=\rank_{\Q}(H)$ 
is greater than zero.
Therefore, $L$ is isomorphic to $SL(2)$ over $K$.

It follows that
$\Gamma$ is commensurable with $SL(2,o_K)$,
where $o_K$ is the ring of integers of $K$. So we can assume that
$\Gamma$ contains
the semidirect product
$$o_K^*\ltimes o_K=\bigg\{ \begin{pmatrix} a & b\\
0 & 1/a \end{pmatrix} \bigg\}\subset SL(2,o_K).$$
 If the group of units $o_K^*$
has positive rank, then $o_K^*\ltimes o_K$
is a solvable group which is not virtually abelian. So the group of units
is finite, which means that $K$ is either $\Q$ or an imaginary
quadratic field, by Dirichlet. If $K$ is imaginary quadratic,
then $H_{\Q}=R_{K/\Q}SL(2)$ and $H(\R)=SL(2,\C)$, which does not map
onto $PSL(2,\R)$. Therefore $K=\Q$ and $H_{\Q}=SL(2)$. It follows
that $\Gamma$ is commensurable with $SL(2,\Z)$. So
$\Gamma$ is commensurable
with a free group. This contradicts
that $G\cong\Gamma$ contains $\Z^{n-1}$ with $n\geq 3$.
\end{proof}

\begin{proof}[Proof of Corollary \ref{non-arithK3}]
Let $M$ be the Picard
lattice of $X$, that is, $M=\Pic(X)$ with the intersection form.
Then $M$ has signature $(1,n)$ by the Hodge index theorem, where
$n\geq 3$ since $X$ has Picard number at least 4.

For an elliptic fibration $X\arrow \P^1$
with no reducible fibers,
the Mordell-Weil group of the fibration has rank $\rho(X)-2=n-1$
by the Shioda-Tate formula \cite[Cor.\ 1.5]{Shioda},
which is easy to check in this case. So the first elliptic fibration
of $X$ gives an inclusion of $\Z^{n-1}$ into $G=\Aut^*(X)$.
The second elliptic fibration gives an inclusion of $\Z^a$
into $G$ for some $a>0$.
In the action of $G$ on hyperbolic $n$-space, the Mordell-Weil
group of each elliptic fibration is a group of parabolic transformations
fixing the point at infinity that corresponds
to the class $e\in M$ of a fiber (which has $\langle e,e\rangle =0$).
Since a parabolic transformation
fixes only one point of the sphere at infinity, the subgroups $\Z^{n-1}$
and $\Z^a$ in $G$ intersect only in the identity. It follows that
the subgroup $\Z^{n-1}$ has infinite index
in $G$.

We are given that $X$ contains a $(-2)$-curve $C$. I claim that
$C$ has infinitely many translates
under the Mordell-Weil group $\Z^{n-1}$. Indeed, any curve with finitely many
orbits under $\Z^{n-1}$ must be contained in a fiber of $X\arrow \P^1$.
Since all fibers are irreducible, the fibers have
self-intersection 0, not $-2$. Thus $X$ contains infinitely many $(-2)$-curves.
Therefore the group $W\subset O(M)$ generated by reflections in $(-2)$-vectors
is infinite. Here $W$ acts simply transitively on the Weyl chambers of
the positive cone (separated by hyperplanes $v^{\perp}$ with $v$ a
$(-2)$-vector), whereas $G=\Aut^*(X)$ preserves one Weyl chamber,
the ample cone of $X$.
So $G$ and $W$ intersect only in the identity. Since $W$
is infinite, $G$ has infinite index in $O(M)$.
By Theorem \ref{lattice},
$G$ is not commensurable
with an arithmetic group.
\end{proof}

\begin{proof}[Proof of Example \ref{non-arithex}]
The given curve $C$ 
in the linear system $|O(4,4)|={|-2K_{\P^1\times \P^1}|}$
is smooth. One can check this with Macaulay 2, for example.
Therefore the double cover $X$ of $\P^1\times \P^1$ ramified along $C$
is a smooth K3 surface. The two projections from $X$ to $\P^1$
are elliptic fibrations. Typically, such a double cover
$\pi:X\arrow \P^1\times \P^1$ would have
Picard number 2, but the curve $C$ has been chosen to be
tangent at 4 points to each of two curves of degree $(1,1)$,
$D_1=\{xv=yu\}$ and $D_2=\{xv=-yu\}$. (These points
are $[x,y]=[u,v]$ equal to $[1,1],[1,2],[1,-1],[1,-2]$ on $D_1$ 
and $[x,y]=[u,-v]$ equal to $[1,1],[1,2],[1,-1],[1,-2]$ on $D_2$.)
It follows that the double
covering is trivial over $D_1$ and $D_2$, outside the ramification
curve $C$: the inverse image
in $X$ of each curve $D_i$ is a union of two
curves, $\pi^{-1}(D_i)=E_i\cup F_i$, meeting
transversely at 4 points. The smooth rational curves $E_1,F_1,E_2,F_2$
on $X$ are $(-2)$-curves, since $X$ is a K3 surface.

The curves $D_1$ and $D_2$ meet transversely at the two points
$[x,y]=[u,v]$ equal to $[1,0]$ or $[0,1]$. 
Let us compute that the double covering $\pi:X\arrow \P^1\times \P^1$
is trivial over the union
of $D_1$ and $D_2$ (outside the ramification curve $C$).
Indeed, if we write $X$ as
$w^2=f(x,y,z,w)$ where $f$ is the given polynomial of degree $(4,4)$,
then a section of $\pi $ over $D_1\cup D_2$
is given by $w=4x^2u^2-5x^2v^2+y^2v^2$. We can name the curves
$E_i,F_i$ so that the image of this section is $E_1\cup E_2$
and the image of the section $w=-(4x^2u^2-5x^2v^2+y^2v^2)$
is $F_1\cup F_2$. Then $E_1$ and $F_2$ are disjoint. So
the intersection
form among the divisors $\pi^*O(1,0),\pi^*O(0,1), E_1, F_2$ on $X$ is given by
$$\begin{pmatrix}
0&2&1&1\\
2&0&1&1\\
1&1& -2 &0\\
1&1&0& -2
\end{pmatrix}$$
Since this matrix has determinant $-32$, not zero, $X$ has Picard number
at least 4.

Finally, we compute that the two projections from $C\subset \P^1\times
\P^1$ to $\P^1$ are each ramified over 24 distinct points in $\P^1$.
It follows that all fibers of our two elliptic fibrations $X\arrow \P^1$
are irreducible. By Corollary \ref{non-arithK3}, the automorphism
group of $X$ (over $\C$, or equivalently over $\overline{\Q}$)
is not commensurable with an arithmetic group. Our calculations
have all worked over $\Q$, and so Corollary \ref{non-arithK3}
also gives that $\Aut(X_{\Q})$ is not commensurable
with an arithmetic group.
\end{proof}

\section{Klt pairs}
\label{klt}

We will see that the previous results can be generalized from Calabi-Yau
varieties to a broader class of varieties using the language of pairs.
For the rest of the paper, we work over the complex numbers.

A normal variety $X$ is {\it $\Q$-factorial }if for every point
$p$ and every codimension-one subvariety $S$ through $p$, there is a regular
function on some neighborhood of $p$ that vanishes exactly on $S$
(to some positive order).

\begin{definition}
A pair $(X,\Delta)$ is a $\Q$-factorial projective variety $X$
with an effective $\R$-divisor $\Delta$ on $X$. 
\end{definition}

Notice that $\Delta$ is an actual $\R$-divisor $\Delta=\sum a_i\Delta_i$,
not a numerical equivalence class of divisors.
We think of $K_X+\Delta$ as the canonical bundle of the pair
$(X,\Delta)$. The following definition picks out an important
class of ``mildly singular'' pairs.

\begin{definition}
A pair $(X,\Delta)$ is {\it klt }(Kawamata log terminal)
if the following holds. Let $\pi:\widetilde{X}
\arrow X$ be a resolution of singularities. Suppose that the union of the
exceptional set of $\pi$ (the subset of $\widetilde{X}$ where $\pi$
is not an isomorphism) with $\pi^{-1}(\Delta)$ is a divisor
with simple normal crossings. Define a divisor $\widetilde{\Delta}$
on $\widetilde{X}$ by
$$K_{\widetilde{X}}+\widetilde{\Delta}=\pi^*(K_X+\Delta).$$
We say that $(X,\Delta)$ is klt if all coefficients
of $\widetilde{\Delta}$ are less than 1. This property is independent
of the choice of resolution.
\end{definition}

\begin{example}
A surface $X=(X,0)$ is klt if and only if $X$ has only quotient singularities
\cite[Proposition 4.18]{KMbook}.
\end{example}

\begin{example}
For a smooth variety $X$ and $\Delta$ a divisor with simple normal crossings
(and some coefficients), the pair $(X,\Delta)$ is klt if and only if
$\Delta$ has coefficients less than 1.
\end{example}

All the main results of minimal model theory, such as the cone theorem,
generalize from smooth varieties to klt pairs. For example,
the Fano case of the cone theorem becomes \cite[Theorem 3.7]{KMbook}:

\begin{theorem}
Let $(X,\Delta)$ be a klt Fano pair, meaning that $-(K_X+\Delta)$
is ample. Then $\Curv(X)$ (and hence the dual cone $\Nef(X)$)
is rational polyhedral.
\end{theorem}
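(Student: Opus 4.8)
The plan is to reduce the klt Fano statement to the smooth cone theorem of the previous section by passing to a resolution of singularities, and to handle the divisor $\Delta$ by the standard trick of perturbing $-(K_X+\Delta)$ to a genuinely $K$-negative class. The key point is that ampleness of $-(K_X+\Delta)$ lets us put all extremal rays of $\Curv(X)$ on the ``good'' side of a canonical-type divisor, so that only finitely many rays occur, exactly as in the Fano corollary already proved.

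First I would take a log resolution $\pi:\widetilde{X}\arrow X$ as in the definition of klt, and write $K_{\widetilde{X}}+\widetilde{\Delta}=\pi^*(K_X+\Delta)$, where by the klt hypothesis every coefficient of $\widetilde{\Delta}$ is $<1$ (and $\widetilde{\Delta}$ is supported on a simple normal crossings divisor, though it need not be effective). The cone theorem in its general ``relative'' form applies to the pair $(\widetilde{X},\widetilde{\Delta}^{+})$ over $X$, where $\widetilde{\Delta}^{+}$ is the effective part; the $K$-negative extremal rays of $\Curv(\widetilde{X})$ over $X$ are spanned by rational curves contracted by $\pi$. Pushing forward, $\Curv(X)$ is the image of $\Curv(\widetilde{X})$ under $\pi_*:N_1(\widetilde{X})\arrow N_1(X)$, so it suffices to control the rays of $\Curv(\widetilde{X})$ that are \emph{not} contracted by $\pi$. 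On any such ray, represented by a curve $C$ not contracted by $\pi$, we have $(K_{\widetilde{X}}+\widetilde{\Delta})\cdot C=\pi^*(K_X+\Delta)\cdot C=(K_X+\Delta)\cdot\pi_*C<0$ since $-(K_X+\Delta)$ is ample and $\pi_*C\neq 0$. Hence every extremal ray of $\Curv(\widetilde{X})$ lies in the half-space where $K_{\widetilde{X}}+\widetilde{\Delta}$ is nonpositive, with the boundary rays (those with $(K_{\widetilde X}+\widetilde\Delta)\cdot C=0$) being among the $\pi$-contracted ones.

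With that in place, the argument is the one from the Fano corollary: apply the (log, relative) cone theorem to conclude that the extremal rays of $\Curv(\widetilde{X})$ in the $(K_{\widetilde{X}}+\widetilde{\Delta})$-negative region are discrete, and since they all lie in a fixed compact slice of the cone bounded away from the ``zero'' hyperplane (because $-(K_X+\Delta)$ is ample, the negativity is uniform on the non-contracted part, and the contracted rays are finite in number), there are only finitely many of them, each rational. Taking the image under $\pi_*$ gives that $\Curv(X)$ is the convex hull of finitely many rational rays, i.e.\ rational polyhedral; dualizing gives the same for $\Nef(X)\subset N^1(X)$.

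The main obstacle is bookkeeping around the fact that $\widetilde{\Delta}$ is not effective, so one cannot literally invoke the smooth cone theorem for $(\widetilde{X},\widetilde{\Delta})$ as stated in the excerpt. The honest fix is to absorb the negative coefficients into $\widetilde{X}$: write $K_{\widetilde{X}}+\widetilde{\Delta}^{+}=\pi^*(K_X+\Delta)+E$ with $E\geq 0$ $\pi$-exceptional, run the relative cone theorem for the klt pair $(\widetilde{X},\widetilde{\Delta}^{+})$ over $X$, and check that $E$ does not interfere because $E\cdot C=0$ for curves $C$ not contracted by $\pi$. Once that compatibility is verified, everything else is the finiteness-plus-compactness packaging already used for Fano varieties, and the rationality of the rays is automatic from the cone theorem.
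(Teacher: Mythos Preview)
The paper does not actually prove this theorem: it is quoted with a citation to \cite[Theorem 3.7]{KMbook} as the klt-pair generalization of the cone theorem, and is used as a black box. So there is no ``paper's proof'' to compare to, only the short argument given earlier for the smooth Fano corollary.

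Your reduction-to-a-resolution strategy, however, has a genuine circularity. After passing to the log resolution you need to control the extremal rays of $\Curv(\widetilde{X})$ in the region where $K_{\widetilde{X}}+\widetilde{\Delta}^{+}$ is negative, and for that you invoke the ``(log, relative) cone theorem'' for the klt pair $(\widetilde{X},\widetilde{\Delta}^{+})$. But the cone theorem stated earlier in the paper is only for a smooth variety with no boundary; the version for a smooth variety with an effective SNC boundary is already the klt cone theorem---it is exactly \cite[Theorem 3.7]{KMbook}, the result you are trying to establish. The passage to a resolution does not let you trade the pair version for the bare $K_{\widetilde{X}}$-version, because $K_{\widetilde{X}}$ itself need not be negative on the relevant rays: you only know $(K_{\widetilde{X}}+\widetilde{\Delta})\cdot C<0$, and $\widetilde{\Delta}\cdot C$ can have either sign.

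The honest proof of the klt cone theorem (as in Koll\'ar--Mori) does not go through a resolution in this way; it uses Kawamata--Viehweg vanishing, the rationality theorem, and the base-point-free theorem directly on the klt pair. Once that full cone theorem is available, the Fano corollary is the same one-line compactness argument the paper gave in the smooth case: $-(K_X+\Delta)$ ample forces every extremal ray of $\Curv(X)$ into the $(K_X+\Delta)$-negative half-space, the rays there are isolated and rational, and compactness of the sphere gives finiteness.
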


Notice that the conclusion does not involve the divisor $\Delta$.
This shows the power of the language of pairs. A variety $X$ may not
be Fano, but if we can find an $\R$-divisor $\Delta$ that makes
$(X,\Delta)$ a klt Fano pair, then we get the same conclusion
(that the cone of curves and the nef cone are rational polyhedral)
as if $X$ were Fano.

\begin{example}
Let $X$ be the blow-up of $\P^2$ at any number of points on a smooth conic.
As an exercise, the reader can
write down an $\R$-divisor $\Delta$ such that
$(X,\Delta)$ is a klt Fano pair. This proves that the nef cone of $X$
is rational polyhedral, as Galindo-Monserrat \cite[Corollary 3.3]{GalM},
Mukai \cite{Mukaifin}, and Castravet-Tevelev \cite{CT} proved
by other methods. These surfaces are definitely not Fano if we blow
up 6 or more points. Their Betti numbers are unbounded,
in contrast to the smooth Fano surfaces. 

More generally, Testa, V\'arilly-Alvarado,
and Velasco proved that every smooth projective rational surface $X$
with $-K_X$ big has finitely generated Cox ring \cite{TVV}.
Finite generation of the Cox ring (the ring of all sections
of all line bundles)
is stronger than the nef cone being rational polyhedral,
by the analysis of Hu and Keel \cite{HK}.
Chenyang Xu showed that a rational surface with $-K_X$ big
need not have any divisor $\Delta$ with $(X,\Delta)$
a klt Fano pair \cite{TVV}. I do not know whether the blow-ups of
higher-dimensional projective spaces considered by Mukai
and Castravet-Tevelev have a divisor $\Delta$ with $(X,\Delta)$
a klt Fano pair \cite{Mukaifin, CT}.
\end{example}

It is therefore natural to extend the Morrison-Kawamata
cone conjecture from Calabi-Yau varieties to {\it Calabi-Yau
pairs }$(X,\Delta)$, meaning that $K_X+\Delta\equiv 0$.
The conjecture is reasonable, since we can prove it in
dimension 2 \cite{Totaro}.

\begin{theorem}
\label{pair}
Let $(X,\Delta)$ be a klt Calabi-Yau pair of dimension 2.
Then $\Aut(X,\Delta)$ (and also $\Aut(X)$) acts with a rational
polyhedral fundamental domain on the cone $\Nef(X)\subset N^1(X)$.
\end{theorem}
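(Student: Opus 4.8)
The plan is to mimic the proof of Sterk's theorem as outlined in section \ref{sterk}, replacing the reflection group generated by $(-2)$-curves with a reflection group adapted to the pair $(X,\Delta)$. First I would reduce to the case where $X$ is smooth: given a klt Calabi-Yau pair $(X,\Delta)$ of dimension $2$, a minimal resolution $\pi\colon\widetilde{X}\to X$ carries a pair $(\widetilde{X},\widetilde{\Delta})$ with $K_{\widetilde X}+\widetilde\Delta=\pi^*(K_X+\Delta)\equiv 0$, and since $(X,\Delta)$ is klt the coefficients of $\widetilde\Delta$ are less than $1$; the exceptional curves are $(-2)$-curves (as $X$ has only quotient singularities, indeed Du Val singularities appear on the relevant locus) and $\Nef(X)$ is a face-cut of $\Nef(\widetilde X)$ that is $\Aut(X,\Delta)$-equivariantly identified with a subcone, so it suffices to treat $\widetilde X$. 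Then I would invoke the classification of smooth projective surfaces with a klt Calabi-Yau pair structure: such an $X$ is (up to the cases already handled) either a surface with $K_X$ numerically trivial — where the original Sterk theorem applies — or a rational surface, or a properly elliptic/ruled surface with extra structure; in the non-Calabi-Yau cases $-K_X$ is effective and one has much more room.

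The core of the argument is the rational surface case. Here I would first show that the positive cone is acted on by a discrete reflection group $W$ with $\Nef(X)$ as fundamental chamber: the candidate walls are the hyperplanes $v^\perp$ for classes $v\in N^1(X)$ with $v^2=-2$ (or more precisely the extremal $(-1)$- and $(-2)$-curve classes) that are \emph{not} components of $\Delta$, so that reflecting in them still preserves the structure. The key input is that $\Aut^*(X,\Delta)$ has finite index in the subgroup of $O(N^1(X))$ preserving both $\Nef(X)$ and the class $[\Delta]$ — this is the analogue of the Torelli-type statement, and for rational surfaces it comes from the fact that $N^1(X)=H^2(X,\Z)$ carries no transcendental part, so every lattice automorphism fixing an ample class and $[\Delta]$ is geometric (or at least geometric up to finite index, using that $\ker(\Aut(X)\to GL(N^1(X)))$ is an algebraic group with finitely many components). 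Then $O(N^1(X))$ — or rather the finite-index subgroup fixing $[\Delta]$, which is still arithmetic — acts on the positive cone with a rational polyhedral fundamental domain $D$ by the Minkowski--Siegel finiteness for arithmetic groups acting on self-dual cones; arranging $D\subset\Nef(X)$ and using that $W$ tiles the positive cone by $\Aut(X,\Delta)$-translates of $\Nef(X)$, one concludes $\Aut(X,\Delta)$ acts on $\Nef(X)$ with fundamental domain $D$. The statement for $\Aut(X)$ follows because $\Aut(X,\Delta)$ has finite index in $\Aut(X)$: any automorphism permutes the finitely many prime components of the fixed divisor class $[\Delta]$ up to the connected identity component.

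The main obstacle I expect is establishing the semidirect product decomposition $O(N^1(X))\doteq \Aut(X,\Delta)\ltimes W$ — equivalently, showing that the "movable-to-the-wall" reflections really do generate a normal subgroup whose fundamental chamber is exactly $\Nef(X)$, and that nothing is left over. For honest Calabi-Yau surfaces this is the content of the Torelli theorem plus the fact that $(-2)$-classes are effective (Riemann--Roch); for klt pairs the delicate point is that a $(-2)$-class $v$ with $v\cdot H>0$ need not be effective when $X$ is not Calabi-Yau, and one must instead work with the actual extremal rays of $\Nef(X)$, show each bounding wall is either a $W$-wall or is $\Aut(X,\Delta)$-equivalent to one inside $D$, and rule out "irrational" accumulation of walls. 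Controlling this requires the cone theorem for the klt pair $(X,\Delta)$ on the $K$-negative side together with an adjunction/volume argument on the hyperbolic-space side (a locally finite wall arrangement whose chamber has finite volume must be rational polyhedral after quotienting by the symmetry group). I would handle the remaining non-rational, non-Calabi-Yau surface types (quasi-elliptic or properly elliptic surfaces admitting a klt Calabi-Yau pair structure) by direct inspection, as in those cases $\rho(X)$ is small or the nef cone is visibly rational polyhedral already.
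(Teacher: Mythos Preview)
Your strategy has a genuine gap at its heart. You assert that for a smooth rational surface $X$, ``every lattice automorphism fixing an ample class and $[\Delta]$ is geometric (or at least geometric up to finite index),'' justifying this by the absence of transcendental cohomology. This is not a valid argument, and the statement is false without further input: the vanishing of $H^{2,0}$ removes an \emph{obstruction} in the K3 case, but it does not by itself \emph{produce} automorphisms. There is no Torelli theorem for rational surfaces. The paper's own example of $\P^2$ blown up at nine very general points has $\Aut(X)$ trivial while $O(N^1(X))$ is large; you will object that this example is only log-canonical, not klt, but that merely shows the klt hypothesis must be used somewhere, and your argument never uses it. Relatedly, your proposed reflection group $W$ cannot be built from $(-1)$-curves: a class $v$ with $v^2=-1$ does not define an integral reflection in $O(N^1(X))$, so the walls of $\Nef(X)$ coming from $(-1)$-curves (and for a rational Calabi-Yau pair there are typically infinitely many) are not $W$-walls for any $W\subset O(N^1(X))$. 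This breaks the semidirect-product picture you are trying to set up; you correctly flag this as the main obstacle, but the workaround you sketch presupposes exactly the geometric realization of lattice isometries that is unavailable.

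The paper's route is quite different and avoids ever needing a Torelli statement on $X$ itself. After reducing to $X$ smooth and rational (via Nikulin), it performs a trichotomy on the Iitaka dimension $\kappa(X,-K_X)\in\{0,1,2\}$. When $\kappa=2$ one finds a klt \emph{Fano} pair structure and $\Nef(X)$ is already rational polyhedral by the cone theorem. When $\kappa=1$ there is an elliptic fibration $X\to\P^1$ and $\Aut^*(X)$ is, up to finite index, its Mordell-Weil group $\cong\Z^n$; one checks directly that this group has finitely many orbits on $(-1)$-curves. The hardest case $\kappa=0$ is handled by contracting the support of $\Delta$ to obtain a singular rational Calabi-Yau surface $Y$, then passing to its global index-one cover $M$, which is an honest K3 or abelian surface with ADE singularities. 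Torelli for K3 surfaces then controls $\Aut(M)$, hence $\Aut(Y)$, and one descends the conclusion to $X$ by showing $\Aut(X,\Delta)=\Aut(Y)$ has finitely many orbits on the $(-1)$-curves of $X$. In short, the construction of automorphisms---the step your outline cannot supply---is achieved by relating $X$ to a genuine Calabi-Yau surface where Torelli is available.
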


Here is a more concrete consequence
of Theorem \ref{pair}:

\begin{corollary} 
\cite{Totaro} Let $(X,\Delta)$ be a klt Calabi-Yau pair
of dimension 2. Then there are only finitely many contractions
of $X$ {\it up to automorphisms of $X$}. Related to that:
$\Aut(X)$ has {\it finitely many orbits }on the set of curves
in $X$ with negative self-intersection.
\end{corollary}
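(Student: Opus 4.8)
The plan is to deduce this corollary directly from Theorem~\ref{pair}. Let $D\subset\Nef(X)$ be the rational polyhedral fundamental domain for the action of $G:=\Aut(X)$ (or $\Aut(X,\Delta)$) guaranteed by that theorem; since $D$ is rational polyhedral, it has only finitely many faces. A contraction of $X$ is determined by a face of $\Curv(X)$, equivalently (passing to the dual cone, which is legitimate on a surface since $N^1(X)=N_1(X)$) by a face of $\Nef(X)$; concretely, a contraction $X\to Y$ corresponds to the face of $\Nef(X)$ consisting of the classes pulled back from $\Nef(Y)$. So I want to show that every face of $\Nef(X)$ is $G$-equivalent to a face that meets $D$. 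Given any face $F$ of $\Nef(X)$, pick a class $u$ in the relative interior of $F$; applying the definition of fundamental domain, some $g\in G$ moves $u$ into $D$, and then $gF$ is a face of $\Nef(X)$ whose relative interior meets $D$. A face whose relative interior meets the polyhedral set $D$ is cut out by (a subset of) the finitely many supporting hyperplanes of $D$, so there are only finitely many possibilities for $gF$; hence only finitely many $G$-orbits of faces of $\Nef(X)$, and therefore only finitely many contractions of $X$ up to $\Aut(X)$.

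For the second assertion, let $C$ be a curve on $X$ with $C^2<0$. Then $C$ spans an extremal ray of $\Curv(X)=\Nef(X)$: indeed the class $[C]$ lies on the boundary of the positive cone (since $C^2<0$ forces $[C]$ outside the closure of the positive cone unless it is extremal), and more precisely the hyperplane $C^\perp$ supports $\Nef(X)$ along a face containing $[C]$, while $[C]$ itself generates an isolated extremal ray because the intersection form is negative definite on $C^\perp\cap(\text{the span of such classes})$—this is the standard fact, used already in the excerpt, that a negative-self-intersection curve gives an isolated extremal ray of the cone of curves. So each such $C$ is determined by the edge $\R_{\geq 0}[C]$ of $\Nef(X)$ it spans, and we are reduced to showing $G$ has finitely many orbits on the set of those edges. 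But edges are one-dimensional faces, and we just showed $G$ has finitely many orbits on faces of $\Nef(X)$ of every dimension; the edge $\R_{\geq0}[C]$ is $G$-equivalent to one meeting $D$, and $D$ being rational polyhedral has only finitely many edges. Since the smallest integral class on each edge determines the curve $C$ (the class of an irreducible curve with $C^2<0$ is primitive, or at worst there are finitely many curves per ray), we conclude $\Aut(X)$ has finitely many orbits on curves of negative self-intersection.

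The main point requiring care—though it is not really an obstacle, just the step that must be stated precisely—is the translation between ``fundamental domain for the $G$-action on $\Nef(X)$'' and ``finitely many $G$-orbits of faces.'' One must be slightly careful because a fundamental domain in the sense of Theorem~\ref{pair} typically means $\bigcup_{g\in G} gD=\Nef(X)$ with interiors of translates disjoint, and one wants every face (not just every interior point) to be reachable; the argument above handles this by choosing a relative-interior point of the face, but one should note that the relevant cone here is $\Nef(X)$, which may be strictly smaller than the positive cone, so ``interior'' must be read relative to $\Nef(X)$. A second small point: for the negative-curves statement one uses that there are only finitely many curves $C$ with $C^2<0$ whose class lies on a given ray, which follows because two such curves with proportional classes would have nonnegative intersection with each other yet negative square, forcing them to be equal. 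With these remarks the corollary follows formally from Theorem~\ref{pair}.
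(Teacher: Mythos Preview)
The paper does not give its own proof of this corollary here; it is stated as a consequence of Theorem~\ref{pair} and referred to \cite{Totaro} for details. Your strategy of deducing both assertions from the rational polyhedral fundamental domain is exactly the intended one, and your argument for the first assertion (finitely many contractions up to $\Aut(X)$) is correct: a contraction determines a face of $\Nef(X)$, any face can be translated so that its relative interior meets $D$, and the map $F\mapsto F\cap D$ into the finite set of faces of $D$ is injective.

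Your argument for the second assertion, however, contains a genuine error. You write ``$\Curv(X)=\Nef(X)$'' and assert that a curve $C$ with $C^2<0$ spans an edge $\R_{\geq 0}[C]$ of $\Nef(X)$. This is false. On a surface one has $N^1(X)=N_1(X)$ as vector spaces, but $\Curv(X)$ and $\Nef(X)$ are \emph{dual} cones in that common space, not the same cone. Since $C\cdot C<0$, the class $[C]$ is not nef, so $[C]\notin\Nef(X)$; in particular $C^{\perp}\cap\Nef(X)$ cannot contain $[C]$, contrary to what you write. The correct translation is: $[C]$ spans an extremal ray of $\Curv(X)$, and by cone duality this ray corresponds to a \emph{facet} (a codimension-one face) $C^{\perp}\cap\Nef(X)$ of $\Nef(X)$, not an edge. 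Distinct negative curves give distinct facets by the argument you do give correctly at the end (two irreducible curves with proportional classes and negative square would intersect negatively, which is impossible). With ``edge'' replaced by ``facet'' throughout, your reduction to the finiteness of $G$-orbits on faces of $\Nef(X)$ is valid and the proof is complete.
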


This was shown in one class of examples by Dolgachev-Zhang
\cite{DZ}. These results are false for surfaces in general, even for some
smooth rational surfaces:

\begin{example}
Let $X$ be the blow-up of $\P^2$ at 9 very general points.
Then $\Nef(X)$ is not rational polyhedral, since $X$ contains
infinitely many $(-1)$-curves. But $\Aut(X)=1$
\cite[Proposition 8]{Gizatullin}, and so
the conclusion fails for $X$.

Moreover, let $\Delta$ be the unique smooth cubic curve in $\P^2$
through the 9 points, with coefficient 1. Then $-K_X\equiv \Delta$,
and so $(X,\Delta)$ is a {\it log-canonical }(and even canonical)
Calabi-Yau pair. The theorem therefore fails for such pairs.
\end{example}

We now give a classical example (besides the case $\Delta=0$ of Calabi-Yau
surfaces)
where Theorem \ref{pair} applies.

\subsection{Example}
Let $X$ be the blow-up of $\P^2$ at 9 points $p_1,\ldots,p_9$ which are the
intersection of two cubic curves. Then taking linear combinations
of  the two cubics gives a $\P^1$-family of elliptic curves through
the 9 points. These curves become disjoint on the blow-up $X$,
and so we have an elliptic fibration $X\arrow \P^1$. This morphism
is given by the linear system $|-K_X|$. Using that, we see that the
$(-1)$-curves on $X$ are exactly the  sections of the elliptic
fibration $X\arrow \P^1$.

In most cases, the Mordell-Weil group
of $X\arrow \P^1$ is $\doteq\Z^8$. So $X$ contains infinitely
many $(-1)$-curves, and so the cone $\Nef(X)$ is not rational
polyhedral. But $\Aut(X)$ acts transitively on the set
of $(-1)$-curves, by translations using the group
structure on the fibers of $X\arrow \P^1$. That leads
to the proof of Theorem \ref{pair} in this example.
(The theorem applies, in the sense that there is an
$\R$-divisor $\Delta$ with $(X,\Delta)$ klt Calabi-Yau:
let $\Delta$ be the sum of two smooth fibers of $X\arrow \P^1$
with coefficients $1/2$, for example.)

\section{The cone conjecture in dimension greater than 2}

In higher dimensions, the cone conjecture also predicts that
a klt Calabi-Yau pair $(X,\Delta)$ has only finitely
many small $\Q$-factorial modifications $X\dashrightarrow X_1$
{\it up to pseudo-automorphisms of $X$}. (See Kawamata \cite{Kawamata}
and \cite{Totaro} for the full statement of the cone conjecture
in higher dimensions.)
A pseudo-automorphism means a birational automorphism which
is an isomorphism in codimension 1.

More generally, the conjecture implies that $X$ has only finitely
many birational contractions $X\dashrightarrow Y$ modulo
pseudo-automorphisms of $X$, where a birational contraction
means a dominant rational map that extracts no divisors.
There can be infinitely
many small modifications if we do not divide out by 
the group $\PsAut(X)$ of pseudo-automorphisms of $X$.

Kawamata proved a relative version of the cone conjecture for
a 3-fold $X$ with a K3 fibration or elliptic fibration $X\arrow S$
\cite{Kawamata}.
Here $X$ can have infinitely many minimal models (or
small modifications) over $S$, but it has only finitely many
modulo $\PsAut(X/S)$.

This is related to other finiteness
problems in minimal model theory. We know that a klt pair $(X,\Delta)$
has only finitely many minimal models if $\Delta$ is big
\cite[Corollary 1.1.5]{BCHM}. It follows that a variety of general
type has a finite set of minimal models. A variety
of non-maximal Kodaira dimension can have infinitely many minimal
models \cite[section 6.8]{Reid}, \cite{Kawamata}.
But it is conjectured that every variety $X$
has only finitely many minimal models {\it up to isomorphism}, meaning
that we ignore the birational identification with $X$. Kawamata's
results on Calabi-Yau fiber spaces imply at least that 3-folds of
positive Kodaira dimension have only finitely many minimal models
up to isomorphism \cite[Theorem 4.5]{Kawamata}. If the abundance
conjecture \cite[Corollary 3.12]{KMbook} holds 
(as it does in dimension 3), then every non-uniruled
variety has an Iitaka fibration where the fibers are Calabi-Yau.
The cone conjecture for Calabi-Yau fiber spaces (plus abundance)
implies finiteness of minimal models up to isomorphism for arbitrary varieties.

The cone conjecture is wide open for Calabi-Yau 3-folds,
despite significant results by Oguiso and Peternell \cite{OP},
Szendr\" oi \cite{Szendroi}, Uehara \cite{Uehara}, and Wilson \cite{Wilson}.
Hassett and Tschinkel recently checked the conjecture for a class
of holomorphic symplectic 4-folds \cite{HT}.

\section{Outline of the proof of Theorem \ref{pair}}

The proof of Theorem \ref{pair} gives a good picture of the Calabi-Yau
pairs of dimension 2. We summarize the proof from \cite{Totaro}.
In most cases, if $(X,\Delta)$ is a Calabi-Yau pair, then $X$ turns out
to be rational. It is striking that the most
interesting case of the theorem is proved 
by reducing properties of certain rational surfaces
to the Torelli theorem for K3 surfaces.

Let $(X,\Delta)$ be a klt Calabi-Yau pair of dimension 2.
That is, $K_X+\Delta\equiv 0$, or equivalently
$$-K_X\equiv \Delta,$$
where $\Delta$ is effective. We can reduce to the case where $X$ is smooth
by taking a suitable resolution of $(X,\Delta)$.

If $\Delta=0$, then $X$ is a smooth Calabi-Yau surface,
and the result is known by Sterk, using the Torelli
theorem for K3 surfaces. So assume that $\Delta\neq 0$.
Then $X$ has Kodaira dimension
$$\kappa(X):=\kappa(X,K_X)$$
equal to $-\infty$.
With one easy exception, Nikulin showed
that our assumptions imply that $X$ is {\it rational}
\cite[Lemma 1.4]{AM}.
So assume that $X$ is rational from now on.

We have three main cases for the proof, depending
on whether the Iitaka dimension $\kappa(X,-K_X)$ is 0,
1, or 2. (It is nonnegative because $-K_X\sim_{\R}\Delta\geq 0$.)
By definition, the Iitaka dimension $\kappa(X,L)$ of a line bundle $L$
is $-\infty$ if $h^0(X,mL)=0$ for all positive integers $m$. Otherwise,
$\kappa(X,L)$ is the natural number $r$ such that
there are positive integers $a,b$ and a positive integer $m_0$
with
$am^r\leq h^0(X,mL)\leq bm^r$ for all positive multiples $m$ of $m_0$
\cite[Corollary 2.1.38]{Lazarsfeld}.

\subsection{Case where $\kappa(X,-K_X)=2$}

That is, $-K_X$ is big. In this case, there is an $\R$-divisor $\Gamma$
such that $(X,\Gamma)$ is klt Fano. Therefore $\Nef(X)$ is rational
polyhedral by the cone theorem, and hence the group $\Aut^*(X)$ 
is finite. So Theorem \ref{pair} is true in a simple way.
More generally, for $(X,\Gamma)$ klt Fano of any dimension,
the Cox ring of $X$ is finitely generated, by 
Birkar-Cascini-Hacon-M\textsuperscript{c}Kernan \cite{BCHM}.

This proof illustrates an interesting aspect of working with pairs:
rather than Fano being a different case from Calabi-Yau,
Fano becomes a special case of Calabi-Yau. That is, if
$(X,\Gamma)$ is a klt Fano pair, then there is another effective
$\R$-divisor $\Delta$ with $(X,\Delta)$ a klt Calabi-Yau pair.

\subsection{Case where $\kappa(X,-K_X)=1$}

In this case, some positive multiple of $-K_X$ gives an elliptic fibration
$X\arrow \P^1$, not necessarily minimal. Here
$\Aut^*(X)$ equals the Mordell-Weil group of $X\arrow \P^1$ up to finite index,
and so $\Aut^*(X)\doteq \Z^n$ for some $n$. This generalizes the example
of $\P^2$ blown up at the intersection of two cubic curves.

The $(-1)$-curves in $X$ are multisections of $X\arrow \P^1$ of a certain fixed
degree. One shows that $\Aut(X)$ has only finitely many orbits on the
set of $(-1)$-curves in $X$. This leads to the statement of Theorem
\ref{pair} in terms of cones.

\subsection{Case where $\kappa(X,-K_X)=0$}
\label{kappa0}

This is the hardest case. Here $\Aut^*(X)$ can be a fairly general group acting
on hyperbolic space; in particular, it can be highly nonabelian.

Here $-K_X\equiv \Delta$ where the intersection pairing on the curves
in $\Delta$ is negative definite. We can contract all the curves in $\Delta$,
yielding a singular surface $Y$ with $-K_Y\equiv 0$. Note that $Y$ is klt
and hence has quotient singularities, but it must have worse than ADE
singularities, because it is a singular Calabi-Yau surface that is rational.

Let $I$ be the ``global index'' of $Y$, the least positive integer with
$IK_Y$ Cartier and linearly equivalent to zero. Then
$$Y=M/(\Z/I)$$
for some Calabi-Yau surface $M$ with ADE singularities. The minimal
resolution of $M$ is a smooth Calabi-Yau surface. Using the Torelli
theorem for K3 surfaces, this leads
to the proof of the theorem for $M$ and then for $Y$, by Oguiso
and Sakurai \cite[Corollary 1.9]{OS}.

Finally, we have to go from $Y$ to its resolution of singularities,
the smooth rational surface $X$.
Here $\Nef(X)$ is more complex than $\Nef(Y)$: $X$ typically contains
infinitely many $(-1)$-curves, whereas $Y$ has none (because $K_Y\equiv 0$).
Nonetheless, since we know ``how big'' $\Aut(Y)$ is (up to finite index),
we can show that the group
$$\Aut(X,\Delta)=\Aut(Y)$$
has finitely many orbits on the set of $(-1)$-curves. This leads to the proof
of Theorem \ref{pair} for $(X,\Delta)$. \qed

\section{Example}

Here is an example of a smooth rational surface with a highly nonabelian
(discrete) automorphism group, 
considered by Zhang \cite[Theorem 4.1]{Zhang}, Blache
\cite[Theorem C(b)(2)]{Blache}, and \cite[section 2]{Totaro}.
This is an example of the last case in the proof
of Theorem \ref{pair}, where $\kappa(X,-K_X)=0$. We will also see a singular
rational surface whose nef cone is round, of dimension 4.

Let $X$ be the blow-up of $\P^2$ at the 12 points: $[1,\zeta^i,\zeta^j]$
for $i,j\in \Z/3$, $[1,0,0]$, $[0,1,0]$, $[0,0,1]$. Here $\zeta$ is a cube root
of 1. (This is the dual of the ``Hesse configuration''
\cite[section 4.6]{DolgachevAbstract}.
There are 9 lines $L_1,\ldots,L_9$ through
quadruples of the 12 points in $\P^2$.)

On $\P^2$, we have
$$-K_{\P^2}\equiv 3H\equiv \sum_{i=1}^9 \frac{1}{3}L_i.$$
On the blow-up $X$, we have
$$-K_X\equiv \sum_{i=1}^9 \frac{1}{3} L_i,$$
where $L_1,\ldots,L_9$ are the proper transforms of the 9 lines, which 
are now disjoint and have
self-intersection number $-3$. Thus $(X,\sum_{i=1}^9 (1/3)L_i)$
is a klt Calabi-Yau pair.

Section \ref{kappa0} shows how to analyze $X$:
contract the $9$ $(-3)$-curves $L_i$ on $X$. This gives a rational surface
$Y$ with 9 singular points (of type $\frac{1}{3}(1,1)$) and $\rho(Y)=4$.
We have $-K_Y\equiv 0$, so $Y$ is a klt Calabi-Yau surface which
is rational. We have $3K_Y\sim 0$, and so $Y\cong M/(\Z/3)$ with
$M$ a Calabi-Yau surface with ADE singularities. It turns out that
$M$ is smooth, $M\cong E\times E$ where $E$ is the Fermat cubic curve
$$E\cong \C/\Z[\zeta]\cong \{[x,y,z]\in \P^2: x^3+y^3=z^3\},$$
and $\Z/3$ acts on $E\times E$ as multiplication by $(\zeta,\zeta)$
\cite[section 2]{Totaro}.

Since $E$ has endomorphism ring $\Z[\zeta]$, the group
$GL(2,\Z[\zeta])$ acts on the abelian surface $M=E\times E$.
This passes to an action on
the quotient variety $Y=M/(\Z/3)$
and hence on its minimal resolution $X$ (which is the blow-up
of $\P^2$ at 12 points we started with). Thus the infinite,
highly nonabelian discrete group $GL(2,\Z[\zeta])$ acts on the smooth rational
surface $X$. This is the whole automorphism group of $X$ up to finite groups
\cite[section 2]{Totaro}.

Here $\Nef(Y)=\Nef(M)$ is a round cone in $\R^4$, and so Theorem \ref{pair}
says that $PGL(2,\Z[\zeta])$ acts with finite covolume on hyperbolic 3-space.
In fact, the quotient of hyperbolic 3-space by an index-24 subgroup
of $PGL(2,\Z[\zeta])$
is familiar to topologists as the complement of the figure-eight knot 
\cite[1.4.3, 4.7.1]{MR}.

\small \sc DPMMS, Wilberforce Road,
Cambridge CB3 0WB, England

b.totaro@dpmms.cam.ac.uk
\end{document}